\documentclass[journal,12pt]{elsarticle}

\usepackage{lineno,hyperref}
\modulolinenumbers[5]

\usepackage{amsmath,amssymb,amsfonts}
\usepackage{amsthm}
\usepackage[ruled,linesnumbered]{algorithm2e}
\usepackage{caption}
\usepackage{subcaption}
\usepackage{mathrsfs}
\usepackage{booktabs}
\usepackage{bbding}
\usepackage{geometry}
\usepackage{color}
\usepackage{easyReview}

\journal{Journal of \LaTeX\ Templates}

\newtheorem{remark}{Remark}

\newtheorem{theorem}{Theorem}

\newtheorem{lemma}{Lemma}
\newtheorem{definition}{Definition}

\newtheorem{requirement}{Requirement}

\usepackage{multirow}

\def\d{\mathrm{d}}

\begin{document}
	\begin{frontmatter}
		\title{Dynamic Controlled Variables Based Dynamic Self-Optimizing Control}
		\author[mymainaddress,quzhou]{Chenchen Zhou}
		\author[mymainaddress,quzhou]{Shaoqi Wang}
		\author[mymainaddress]{Hongxin Su}
		\author[mymainaddress]{Xinhui Tang}
		\author[mymainaddress,quzhou]{Yi Cao}
		
		\author[mymainaddress,quzhou]{Shuang-Hua Yang\corref{mycorrespondingauthor}}
		\cortext[mycorrespondingauthor]{Corresponding author}
		\ead{yangsh@zju.edu.cn}
		
		\address[mymainaddress]{College of Chemical and Biological Engineering, Zhejiang University, Hangzhou, Zhejiang Province, P.R. China}
		\address[quzhou]{Institute of Zhejiang University-Quzhou, Quzhou, Zhejiang Province,  P.R. China}
		\date{}
		
		\begin{abstract}
			{Self-optimizing control is a strategy for selecting controlled variables, where the economic objective guides the selection and design of controlled variables, with the expectation that maintaining the controlled variables at constant values can achieve optimization effects, translating the process optimization problem into a process control problem. Currently, self-optimizing control is widely applied to steady-state optimization problems. However, the development of process systems exhibits a trend towards refinement, highlighting the importance of optimizing dynamic processes such as batch processes and grade transitions. This paper formally introduces the self-optimizing control problem for dynamic optimization, termed the dynamic self-optimizing control problem, extending the original definition of self-optimizing control. A novel concept, "dynamic controlled variables" (DCVs), is proposed, and an implicit control policy is presented based on this concept. The paper theoretically analyzes the advantages and generality of DCVs compared to explicit control strategies and elucidates the relationship between DCVs and traditional controllers. Moreover, this paper puts forth a data-driven approach to designing self-optimizing DCVs, which considers DCV design as a mapping identification problem and employs deep neural networks to parameterize the variables. Three case studies validate the efficacy and superiority of DCVs in approximating multi-valued and discontinuous functions, as well as their application to dynamic optimization problems with non-fixed horizons, which traditional self-optimizing control methods are unable to address.}
		\end{abstract}
		\begin{keyword}
			Self-Optimizing Control, dynamic optimization, implicit learning
		\end{keyword}
	\end{frontmatter}
	\section{Introduction}
	The field of control systems employs specialized techniques to regulate dynamic systems, achieving desired outcomes despite uncertainties\cite{alleyne2023}. Thus, in modern system, control system design is especially crucial, along with the design of the system and equipment. Broadly speaking, control system design can be divided into two parts: control structure design and controller design. 
	During the control structure design phase, engineers make critical decisions concerning the selection of variables for measurement, the determination of inputs to be manipulated, and the establishment of links between these two sets \cite{foss1973,skogestad2004,skogestad2005}.
	The design of the controller is usually carried out under the assumption of a predetermined control structure.
	
	The design of the control structure, as a step that embodies the inherent characteristics of the actual system, typically exerts a significant influence on controller design. A well-designed control structure can simplify the controller design and enhance control performance. Therefore, the interdependence and indispensability of both control structure design and controller design are widely acknowledged, with control structure design being deemed more critical\cite{foss1973,stephanopoulos2000}. 
	However, greater emphasis is still placed on controller design within the field of control  academic society. 
	
	
	In the domain of control structure design, Skogestad\cite{skogestad2000} is the pioneer in formulating the concept of a self-optimizing control (SOC) structure. This approach is distinguished by its emphasis on minimizing the loss of optimality relative to a defined economic objective function. Rather than focusing solely on controllability or control performance, see \textit{i.e.} \cite{vandewal2001}, SOC seeks to retain near-optimal operation even in the face of uncertainty or disturbances by selecting self-optimizing controlled variables (CVs):
	
	\textit{Self-optimizing control is when we can achieve an acceptable loss with constant setpoint values for the controlled variables (without the need to re-optimize when disturbances occur).}
	
	
	The successful application of self-optimizing control requires tools and methods for selecting good CVs. In recent years, substantial progress has been made in rigorously formulating mathematical optimization methods to identify self-optimizing controlled variables. 
	Initial local SOC approaches relied on linear approximations around nominal operating points, providing useful candidate control structures for limited regions \cite{halvorsen2003,alstad2007,kariwala2008}. Tailored branch-and-bound techniques expanded the search over all variable combinations to identify the best measurement subsets \cite{caoBidirectionalBranchBound2008,kariwalaBidirectionalBranchBound2009,kariwalaBidirectionalBranchBound2010}. Data-driven \cite{ye2013a}, model-free \cite{jaschke2013} variants extended these local methods by analyzing measurement data directly.
	
	Significant efforts to obtain near-optimal CVs over wider operating ranges have also emerged. These accommodate active set changes and smooth nonlinearities, establishing connections to related techniques like necessary conditions of optimality tracking (NCO tracking) for non-localized optimization \cite{jaschke2011}. In particular, polynomial\cite{jaschke2012} and data-driven methods \cite{ye2013a,Ye2015,ye2022a,ye2023} show promise for larger envelopes. Alternatively, switching control structures between active set regions can cover expanded operation \cite{manum2012,reyes-lua2020a,krishnamoorthy2020a}.
	
	Recent literature also demonstrates the wide applicability of self-optimizing control across diverse engineering domains. For example, Abunde Neba \textit{et al}. applied SOC to wastewater treatment process, optimizing anaerobic digestion systems under kinetic uncertainty \cite{abundeneba2020}. Bottari and Braccia optimized the economic performance of continuous cross-flow grain dryers by selecting the SOC control structure \cite{bottari2023}. Fu \textit{et al}. developed SOC methods to maximize the efficiency of solid oxide fuel cells, with both local and global approaches \cite{fu2023a,fu2023b}.
	Li \textit{et al}. proposed a two-stage anaerobic digestion control scheme combining self-optimizing and extremum-seeking techniques \cite{li2023c}. Extractive distillation, chillers, and other processes have also benefited from self-optimizing principles \cite{zhang2023a,zhao2021}.
	
	In summary, recent literature provides numerous examples of successful SOC applications across a diverse set of chemical, energy, industrial, and environmental systems.
	Both theoretical developments and expanded applications continue advancing SOC as a powerful approach for handling uncertainties and optimizing complex, dynamic systems. SOC has demonstrated wide versatility and effectiveness for automated process optimization. 
	
	Though SOC has achieved impressive successes, a glaring limitation persists: it is currently confined to disturbances or variations in plant behavior that persist over a significantly longer period than the plant dynamics. In other words, SOC primarily focuses on static optimization problems. 
	However, most real industrial processes exhibit dynamic behavior where disturbances can evolve within the timescale of the process dynamics.
	
	{The backdrop for the proposition of static self-optimizing control methodologies was large-scale continuous chemical processes, where such processes operate predominantly in steady-state conditions for most of the time. Consequently, the optimization problem for these processes can be regarded as a static optimization issue. However, the current process industry exhibits a trend towards refinement and eco-friendliness \cite{nikacevicOpportunitiesChallengesProcess2012}. The inclination towards refinement imposes more stringent demands on product quality and stability of chemicals, inevitably leading to reduced intermediate storage, increased recycle streams, and expanded downstream processing, thereby augmenting the complexity of plant control and exacerbating the challenges in product quality control \cite{nikacevicOpportunitiesChallengesProcess2012}. Simultaneously, the demand for refined products is relatively lower, and catering to diverse applications may necessitate alterations in the system's operational state, resulting in a transition from predominantly continuous processes to batch processes. Even for continuous processes, product grade transitions have become more frequent \cite{shiOptimizationGradeTransitions2016,kadamOptimalGradeTransition2007}. Both batch processes and grade transition processes deviate from steady-state conditions, rendering the system's dynamic behavior a critical and indispensable factor influencing the economic viability of the process system.}
	
	{Despite the practical need, extending SOC to handle dynamic disturbances and systems remains an open challenge \cite{jaschkeSelfoptimizingControlSurvey2017}.
	Currently, the prevalent approach to tackle dynamic optimization problems in process systems involves integrating optimization and control \cite{biegler2009,tatjewski2008}, adopting nonlinear model predictive control \cite{grune2017} or economic model predictive control\cite{ellis2017} strategies for dynamic optimization. These methods confront a dilemma – the more intricate the considered model, the better the online control performance, but the online solution becomes increasingly challenging. Here, model intricacy refers to incorporating more realistic assumptions, minimizing model simplifications, such as employing distributed parameter models\cite{dufour2003} or considering comprehensive uncertainties during the solution process, leading to online stochastic\cite{mesbah2016} or robust optimization\cite{lucia2014}. Self-optimizing control methods can alleviate this contradiction by shifting the complex optimization process offline to the design of controlled variables, eliminating the need for repeated online optimization and thereby reducing online computational demands. }
	
	{Regardless of whether it is the practical demands of the industrial sector or the theoretical advancement of self-optimizing control itself, there exists an pressing necessity to extend the concepts of self-optimizing control to encompass dynamic optimization problems.}
	While self-optimizing control has predominantly focused on static systems, some initial efforts have attempted to extend the methodology to dynamic processes: 
	Baldea \textit{et al}. provided early analysis of dynamic considerations for SOC synthesis \cite{baldeaDynamicConsiderationsSynthesis2008}. Grema and Cao developed a dynamic SOC approach for uncertain reservoir waterflooding \cite{grema2020} by extending the data-driven SOC in static problems\cite{ye2013a}. Hu \textit{et al.} proposed a framework to design controller and CVs together by solving a bio-objective optimization problem in linear systems \cite{hu2012}. Ye and Skogestad formulated a dynamic SOC framework for unconstrained batch processes \cite{ye2018}. Klemets \textit{et al.} designed PI controllers and CVs together \cite{Klemets2019}.
	
	{However, these works have focused on developing SOC techniques tailored to specific dynamic process models. In particular, for dynamic optimization problems with infinite and free time horizon, existing SOC methods are still incapable of providing solutions. A universally applicable framework for dynamic self-optimizing control remains an open challenge. 
	Research resolving core issues around integrating dynamics into control structure selection is critical to unlock the full potential of self-optimizing control \cite{engell2007a,ellis2014a}.  
	Developing rigorous dynamic SOC techniques promises to expand the scope and impact of this powerful methodology.}
	
	{Motivated by those reasons, in this paper, the dynamic SOC problem is systematically proposed. Since dynamic SOC concerns the dynamic behavior of the optimization system, requiring controlled variables to remain constant at all times, traditional controlled variables only describe the operational objectives the system aims to achieve, typically representing steady-state behavior. This prompts us to introduce a novel control strategy, termed dynamic controlled variables (DCVs).} DCVs extend the CV notion to dynamic systems by implicitly solving a parameterized system of functions over the control trajectory:
	\begin{equation}
		\hat{{u}_t}=\underset{{u}_t \in \mathcal{U}_t}{\operatorname{arg}} \quad h_\theta({u}_t,\boldsymbol{O}_t)=0
	\end{equation}
	where $\boldsymbol{O}_t$ and ${u}_t$ denote the observation and control input at time $t$.
	
	{To elucidate the characteristics of DCVs more clearly, this paper draws a comparison between the concept of DCVs, controllers, and sliding mode control. Furthermore, based on the definition of the dynamic SOC problem, three requirements for self-optimizing DCVs are proposed, and four indices are introduced to quantify these requirements. Finally, the design of DCVs is viewed as a mapping identification problem, and a method for designing self-optimizing DCVs is presented, with its superiority validated through simulation experiments.} 
	
	The contribution of this paper is summarized as follows:
	{The contribution of this paper is summarized as follows:
	\begin{enumerate}
		\item The dynamic self-optimizing control problem is formally introduced, enabling the controlled variable design problem to be considered and investigated from a novel perspective.
		\item The concept of DCVs is proposed, an implicit control policy based on DCVs is presented, and its superiority over explicit control policys is theoretically proven.
		\item  The relationships between DCVs and traditional controllers, as well as the distinctions between dynamic SOC and sliding mode control, are elucidated.
		\item A method to design self-optimizing DCVs is proposed, overcoming the limitations of current self-optimizing control methods which are incapable of solving dynamic optimization problems with infinite and free time horizons.
	\end{enumerate}
	More broadly, the proposed implicit control policy based on DCVs in this paper can be applied to topics revolving around the design of explicit control strategies, such as model predictive control policy approximations \cite{zhou2024a}, learning for control, and related areas.}

	The paper is organized as follows:
	Section~\ref{sec:DOI} reviews dynamic optimization problems and their implementation.
	Section~\ref{sec:DCV} proposes the dynamic self-optimizing control problem and delineates the formulation and definition of DCVs, and propounds an implicit control strategy based on DCVs, with theoretical analysis demonstrating the enhanced flexibility of the DCV representation for control policies. The relationship between DCVs and controllers, as well as the distinctions between dynamic SOC and sliding mode control, are also discussed.
	Section~\ref{sec:DSOC} presents a method for designing self-optimizing DCVs, utilizing a unified metric to evaluate the performance of DCVs.
	Section~\ref{sec:CS} investigates 4 case studies, validating the advantages of DCVs in approximating multivalued and discontinuous functions, in addition to applications of self-optimizing DCVs for dynamic optimization problems with non-fixed horizons.

	\section*{Notion}
	Define $\mathbb{Z}{[a,b]}$ as the set of integers between the interval $[a,b]$ and $\mathbb{R}^{n}$ as the $n$-dimensional real number space. Let $I_m$ denote the $m \times m$ identity matrix. Use $0_{m\times n}$ to represent a matrix of zeros, with subscript dimensions omitted when clear from context for simplicity. For a sequence $\{s_k\}_{k=0}^{L-1}$ with $s_k \in \mathbb{R}^\eta$, stack its vectors as $\boldsymbol{s} = [s_{0}^\top, s_{1}^\top, \dots, s_{L-1}^\top]^\top$ and a stacked window of it as $\boldsymbol{s}_{[l,j]} =[s_{l}^\top, s_{l+1}^\top, \dots, s_j^\top]^\top$ for $0 \leq l < j$. 
	
	All variables in this paper and their descriptions are in Table \ref{tab:var} in appendix.
	\section{Dynamic optimization and Implementation}
	\label{sec:DOI}
	The primary objective of this paper is to systematically extend the concept of self-optimizing control to dynamic optimization problem. Henceforth, dynamic optimization problems and their implementation strategies are introduced in this section.
	\subsection{Dynamic optimization}
	Before introducing to the dynamic optimization, let us first describe the system under consideration in this paper:
	\begin{equation}
		\label{eq:sys}
		{x}_{k+1} = f_k(x_{k},u_{k},d_{k})
	\end{equation}
	\begin{equation}
		\label{eq:sys_m}
		{y}_{k} = m_k(x_{k},u_{k},d_{k})+w_{k} ,
	\end{equation}
	where $k = 0,1,\dots,L$ denotes the time instant, $L\in [0,\inf]$ denotes the operating time horizon, $x_{k} \in \mathbb{R}^{n_x}$ is the system state at instant $k$, and the initial states $x_0 \in \mathcal{X}_0$, $u_{k}  \in \mathbb{R}^{n_u}$ is the system control input at instant $k$,	$f_k: \mathbb{R}^{n_x} \times \mathbb{R}^{n_u} \times \mathbb{R}^{n_d}\rightarrow \mathbb{R}^{n_x}$ are smooth functions. $d_k \in \mathbb{R}^{n_d} $ is random disturbance which follows the distribution $d_k \sim \mathcal{D}_k$.  $y_{k} \in \mathbb{R}^{n_y}$ are the measurements, $m_k: \mathbb{R}^{n_x} \times \mathbb{R}^{n_u} \times \mathbb{R}^{n_d} \rightarrow \mathbb{R}^{n_y}$ is the measurement function, and $w_{k} \in \mathbb{R}^{n_y}$ denotes the measurement noise, which follows the distribution $w_k \sim \mathcal{W}$. 
	
	The general formulation of dynamic optimization problems shall be subsequently delineated.
	When optimizing operations for a given system, the control objectives should be firstly formulated. One category relates to feasibility - keeping process variables within acceptable limits despite disturbances. These constraints may arise from product quality, safety, environmental regulations, etc. In this paper, the following constrains are considered:
	\begin{equation}
		{g}_k(u_k, x_k) \leq 0 \ , k = 0,1,2,\dots,L  \label{eq:cons_path}
	\end{equation}
	where $g_k: \mathbb{R}^{n_u} \times \mathbb{R}^{n_x} \rightarrow \mathbb{R}^{n_{g_k}}$ is a known smooth function representing the constraint on the control input $u_k$ and state $x_k$ at time $k$. Or more compactly:
	\begin{equation}
		\boldsymbol{g}(\boldsymbol{u}, x_0, \boldsymbol{d}) \leq 0
	\end{equation}
	where $\boldsymbol{g}: \mathbb{R}^{n_u\times L} \times \mathbb{R}^{n_x} \times \mathbb{R}^{n_d\times L} \rightarrow \mathbb{R}^{n_{g}}$ is a known smooth function representing the constraints over the control sequence $\boldsymbol{u}$ given initial state $x_0$ and disturbance sequence $\boldsymbol{d}$.  $n_{g} = \sum_{k=0}^{L} n_{g_k}$
	$\boldsymbol{d}$ and $\boldsymbol{u}$ denote the disturbance sequence and control input sequence over some horizon of length $L \in [0,\inf]$, respectively:
	$$
	\boldsymbol{d} := \left[d_0^{\top}  \ d_{1}^{\top}  \ \dots \  d_{L-1}^{\top}  \right]^{\top} \in  \mathbb{R}^{n_d  L} ,
	$$
	$$
	\boldsymbol{u} := \left[u_0^{\top}  \ u_{1}^{\top}  \ \dots \  u_{L-1}^{\top}  \right]^{\top} \in  \mathbb{R}^{n_u  L} .
	$$
	%
	
	The second objective category stems from economics - optimizing performance post-feasibility. In this paper, it is defined through a cost function of the form
	\begin{equation}
		\label{eq:obj}
		\mathcal{J}(\boldsymbol{u} | x_0,\boldsymbol{d}) :=  l_f(x_{L})+ \sum_{i=0}^{L-1} l_i(x_{i},u_{i})
	\end{equation}
	where $l_i: \mathbb{R}^{n_x} \times \mathbb{R}^{n_u} \times \mathbb{R}^{n_d} \rightarrow \mathbb{R}$ and $l_f: \mathbb{R}^{n_x} \rightarrow \mathbb{R}$ are known smooth functions which represent the path cost function and terminal cost function.
	\begin{remark}
		It should be underscored that the specific structure of the cost function, as the sum of decoupled stage-cost terms, is not strictly necessary, although it is retained as it aligns with all available optimal control problem solvers relied upon in the design phase.
	\end{remark}
	
	In summary, the dynamic optimizing control problem can be formulated as:
	\begin{equation}
		\label{eq:dyopt}
		\begin{aligned}
			\mathcal{J}^{\star}(x_0,\boldsymbol{d}) = \min_{\boldsymbol{u}} \ & 
			 l_f(x_{L})+ \sum_{i=0}^{L-1} l_i(x_{i},u_{i}) \\
			\text{s.t.} \quad &{x}_{k+1} = f_k(x_{k},u_{k},d_{k}) \\
			& 	\boldsymbol{g}(\boldsymbol{u}, x_0, \boldsymbol{d}) \leq 0
		\end{aligned}
	\end{equation}
	$\mathcal{J}^{\star}$ denotes the optimal value of dynamic optimization problem \eqref{eq:dyopt}. 
	
	\begin{remark}
		Dynamic optimization problems can be categorized into three classes based on the nature of the objective functional $L$. When $L = \inf$, it denotes an infinite-horizon dynamic optimization problem; when $L$ is a fixed finite value, it represents a fixed-horizon dynamic optimization problem; and when L is finite yet free, it is termed an open-horizon dynamic optimization problem, which typically involves terminal conditions. Numerically solving infinite-horizon dynamic optimization problems remains a challenging endeavor. The most viable strategy is to employ a model predictive control approach, transforming the infinite-horizon problem into a sufficiently long finite-horizon dynamic optimization problem for resolution. Fixed-horizon problems can be directly solved using numerical methods. For open-horizon problems, the numerical solution approach is akin to the infinite-horizon case, whereby a sufficiently large temporal domain is considered to ensure the terminal condition is met, with the cost function set to 0 thereafter, enabling numerical resolution. Detailed numerical solution methods for dynamic programming problems can be referenced in \cite{bellman2015}.
	\end{remark}

	Without loss of generality, the constraints of this problem can also be represented in the form of the following set:
	$\boldsymbol{u} \in S_{\text{feas}}(x_0,\boldsymbol{d})$ where $S_{\text{feas}}$ denotes the feasible region of $\boldsymbol{u}$.
	
	Further, it is assumed that the dynamic optimization problem \eqref{eq:dyopt} has a (local) minimum.
	$\boldsymbol{u}^{\star} := \psi(x_0,\boldsymbol{d}) = \left[{u_0^{\star}}^{\top}  \ {u_{1}^{\star}}^{\top}  \ \dots \  {u_{L-1}^{\star}}^{\top}  \right]^{\top} \in  \mathbb{R}^{n_u  L}  $ and $\mathcal{J}^{\star}(x_0,\boldsymbol{d})$
	represent the optimal solution and optimal value of the dynamic optimization problem \eqref{eq:dyopt}. 
	Broadly speaking, the optimal solution to problem \eqref{eq:dyopt} is not unique, so represent its optimal value and optimal solutions with the following mapping can be represented:
	
	The optima value mapping acts from $\mathbb{R}^d$ to $\mathbb{R}$ and is defined by
	$$
	\mathcal{J}^{\star} : (x_0,\boldsymbol{d}) \mapsto \inf _{\boldsymbol{u}}\left\{\mathcal{J}(\boldsymbol{u} | x_0,\boldsymbol{d}) \mid \boldsymbol{u} \in S_{\text {feas }}(x_0,\boldsymbol{d})\right\} 
	$$
	The optimal value denotes the infimum of ${J}$ over the feasible set of control inputs $\boldsymbol{u}$, given the fixed initial state $x_0$ and disturbance $\boldsymbol{d}$. At the same time, it is assumed that $\inf$ is finite.
	
	The optimal set mapping is 
	defined by:
	$$
	S_{\mathrm{opt}}: (x_0,\boldsymbol{d}) \mapsto\left\{\boldsymbol{u} \in S_{\text {feas }}(x_0,\boldsymbol{d}) \mid \mathcal{J}(\boldsymbol{u} | x_0,\boldsymbol{d})=S_{\mathrm{val}}(x_0,\boldsymbol{d})\right\} .
	$$
	
	The optimal path is typically not unique, which implies that, in the general case, the optimal set mapping is not a one-to-one mapping, but rather a set-valued mapping.\cite{dontchevImplicitFunctionsSolution2014}
	
	\subsection{Implementation of the optimal operation}
	\label{sec:ipp}
	Optimizing control poses two central challenges. The first is the mathematical and computational task of solving the dynamic optimization problem to determine the optimal operating point. Various techniques can address this, typically converting large-scale dynamic optimizations into nonlinear programs for numerical solution via nonlinear programming (NLP) solvers. Relevant methods and tools are reviewed in \cite{biegler2021,bieglerAdvancesSensitivitybasedNonlinear2015,bazaraa2013}.
	
	The second issue is practical implementation of the optimal solution. While solving the optimization provides the ideal targets, additional considerations arise in real-time control. Factors like process noise, modeling errors, and disturbances must be addressed to achieve optimal or near optimal plant operation. 
	Moreover, not all states and disturbances specified in the optimization are directly measurable. 
	
	There are generally two approaches to addressing this issue. The key difference between these two approaches lies in whether measurements are utilized. If measurements are unavailable, a robust optimization framework is introduced to guarantee feasibility across various uncertainties, consequently introducing conservatism and some degree of performance loss. In contrast, the conservatism can be reduced through leveraging measurements. Measurements can be employed to accommodate uncertainties by adjusting input trajectories while considering both parametric uncertainties and disturbances.
	
	In this paper, $\boldsymbol{y}_{[k-M,k-1]}$ denotes the past measurement trajectory where $M$ denotes the observation window. The control inputs can also be regarded as generalized measured variables. Here, ${\xi}(k) := [{y}(k)^{\top},{u}(k)^{\top}]^{\top} $ and $\boldsymbol{\xi}_{[k-M,k-1]} $ denote as the extended measurements and the past extended measurement trajectory.

	Measurement-based methods can be categorized into two classes: one involves estimating the system's states and disturbances through measurements, while the other directly utilizes the measurements for feedback control.
	In the former one, the disturbances are assumed to change slowly, i.e. $d_k = d$ is constant over $M$ steps, or $d_k$ follows a fixed random distribution, i.e. $d_k \sim \mathcal{D}.$ Under this assumption, if the pair $(x_k, d)$ were reconstructible from the extended observation $\boldsymbol{\xi}_{[k-M,k-1]}$, one could reconstruct estimations $\hat{x}_k$ and $\hat{d}$ of these two quantities as functions of the previous measurements:
	$$\hat{x}_k(\boldsymbol{\xi}_{[k-M,k-1]}); \quad \hat{d}(\boldsymbol{\xi}_{[k-M,k-1]})$$
	Based on Bellman's principle of optimality \cite{bellman1966,bellman2015}, the optimal control input $u_k^{\star}$ could be treated as a function of $x_k$ and $d$. After states and disturbance are estimated, the optimized control input can be obtained under suitable optimization solvers. However, this assumption may not necessarily hold in practice \cite{doyleGuaranteedMarginsLQG1978}. 
	
	The latter one is named the direct feedback optimization methods \cite{Krishnamoorthy2022}. The most common of this is the Necessary Conditions of Optimality (NCO) tracking approach, provide a general framework for transforming an operating optimization problem into a control problem. This is because, at the optimal operating point, the first-order necessary optimality conditions must be satisfied. Therefore, if the gradients of the economic objective with respect to the control inputs can be measured (or estimated) online, then gradient descent can be used to iteratively update the control inputs until the NCO are met. However, this class of methods suffers from two main drawbacks: 1) The gradient estimation requires providing sufficient excitation, which in practical implementation, especially near the optimum, is challenging to satisfy\cite{bristow2006}; 2) For non-convex problems, since the gradient being zero is only a necessary, not sufficient, condition for optimality, gradient-based methods may converge to saddle points or local maxima.
	
%
%
	Unlike other control techniques, self-optimizing control focuses on the design of controlled variables, the outputs of the system, rather than the design of the controller, the inputs to the system. In process systems, the design of the control structure, which includes the selection of controlled variables, is more crucial than the design of the controller itself. However, the concept of dynamic self-optimizing control problems and the form of controlled variables in dynamic optimization problem has not been systematically discussed thus far. Henceforth, the definition for dynamic self-optimizing control problems and the form of dynamic controlled variables will be provided.

	\section{Dynamic self-optimizing control and Dynamic controlled Variables}
	\label{sec:DCV}
	\subsection{The definition of Dynamic self-optimizing control}
	Steady-state SOC is a technique targeted at steady-state optimization problems, serving as an alternative to real-time optimization. It achieves near-optimal performance by offline design of self-optimizing controlled variables, which can be maintained at a constant setpoint online without the need for setpoint updates.
	
	Dynamic self-optimizing control (DSOC) is the extension of SOC to dynamic optimization problems. The definition of DSOC is as follows: 
	\begin{definition}[Dynamic self-optimizing control]
		Dynamic self-optimizing control is when the selected controlled variables are  maintained at constant values (without loss of generality at 0) at every instant, an acceptable loss is achieved (without need for online re-optimization despite disturbances and measurement noise). 
	\end{definition}
	In other words, the key to DSOC is the design of controlled variables that satisfy the above requirements.
	
	This definition is analogous to the concept of steady-state self-optimizing control, with the sole distinction that here the controlled variables need to be maintained at a constant value over time, i.e., their entire operating trajectory is specified.
	In contrast, for traditional self-optimizing controlled variables (CVs), only their steady-state value needs to be maintained at the setpoint, which can be achieved by incorporating integral action in the controller design.
	
	In the conventional SOC framework, the controller design and CV design are separated. The controller design focuses on the system's dynamic and transient behavior, while the CV design and their setpoints typically target the steady-state behavior.
	
	To overcome this limitation, the traditional controlled variables that only consider steady-state behavior must be generalized to incorporate the system's dynamic characteristics. In this work, such controlled variables are termed Dynamic Controlled Variables (DCVs).
	
	The systematic analysis of dynamic controlled variables is a primary objective of this section.
	
	\subsection{The concept and form of DCVs}

		
	In order to seize the dynamic characteristics of the system, the CVs at time $k$ should be expressed as functions of the past extended measurement trajectory $\boldsymbol{\xi}_{[k-M,k-1]}$ where $M$ denotes the observation window. 
	 
	Based on whether the predicted future measurements and inputs are explicitly included, the form of dynamic controlled variables can be expressed in the following three ways:
	\begin{enumerate}
		\item Explicit forecasting form:
			$$c_k = h_1( \boldsymbol{\xi}_{[k-M,k-1]}, \hat{\boldsymbol{\xi}}_{[k,k+H-1]})$$
		where the CV mapping $h_1()$ explicitly depends on estimated future measurements and forecasted future inputs $\hat{\boldsymbol{\xi}}_{[k,k+H-1]}$ over a horizon $H$, in addition to past measured trajectories $\boldsymbol{\xi}_{[k-M,k-1]}$.
		\item  Implicit forecasting form:
			$$c_k = h_2(u_k, \boldsymbol{\xi}_{[k-M,k-1]})$$
		where the CV mapping $h_2()$ utilizes historical data and current control input, with any anticipation of the future implicitly embedded within the functional structure.
		\item No forecasting form:
			$$c_k = h_3(\boldsymbol{\xi}_{[k-M,k-1]})$$
		where the CV mapping $h_3()$ utilizes only historical data without future information.
	\end{enumerate}
	
	The key relationship is that controllability is a necessary condition for effective dynamic controlled variables. In the no forecasting form, CVs rely solely on past measurements, making them uncontrollable. To enable control, an additional controller, $u_k = \pi(\boldsymbol{\xi}_{[k-M,k-1]})$ , relating $u_k$ to $\boldsymbol{\xi}_{[k-M,k-1]}$ must be designed. Under the assumption that such a controller $\pi()$ exists, the no forecasting form and the implicit forecasting form can be transformed into each other. More specifically, the controller $\pi()$ is substituted into the implicit forecasting form:
	$$c_k = h_2(u_k, \boldsymbol{\xi}_{[k-M,k-1]}) = h_2( \pi(\boldsymbol{\xi}_{[k-M,k-1]}),\boldsymbol{\xi}_{[k-M,k-1]}) =  h_3(\boldsymbol{\xi}_{[k-M,k-1]}) $$

	The explicit forecasting form represents a receding horizon control where future inputs over a window are considered, but only the current input is implemented. The future predictions also depend on current measurements. Therefore, it can also be reduced to the implicit forecasting form. More specifically, the prediction model $\hat{\boldsymbol{\xi}}_{[k,k+H-1]} = F(\boldsymbol{\xi}_{[k-M,k-1]},u_k)$ is substituted into the explicit forecasting form:
	$$ \begin{aligned}
		c_k &= h_1(\boldsymbol{\xi}_{[k-M,k-1]}, \hat{\boldsymbol{\xi}}_{[k,k+H-1]}) \\
		&= h_1(\boldsymbol{\xi}_{[k-M,k-1]}, F(\boldsymbol{\xi}_{[k-M,k-1]},u_k)) \\
		&= h_2(u_k, \boldsymbol{\xi}_{[k-M,k-1]})
	\end{aligned}
	$$
	The relationship between these three forms is summarized in Fig~\ref{fig:DCV}.
	\begin{figure}[h]
		\centering
		\includegraphics[width=0.4\textwidth]{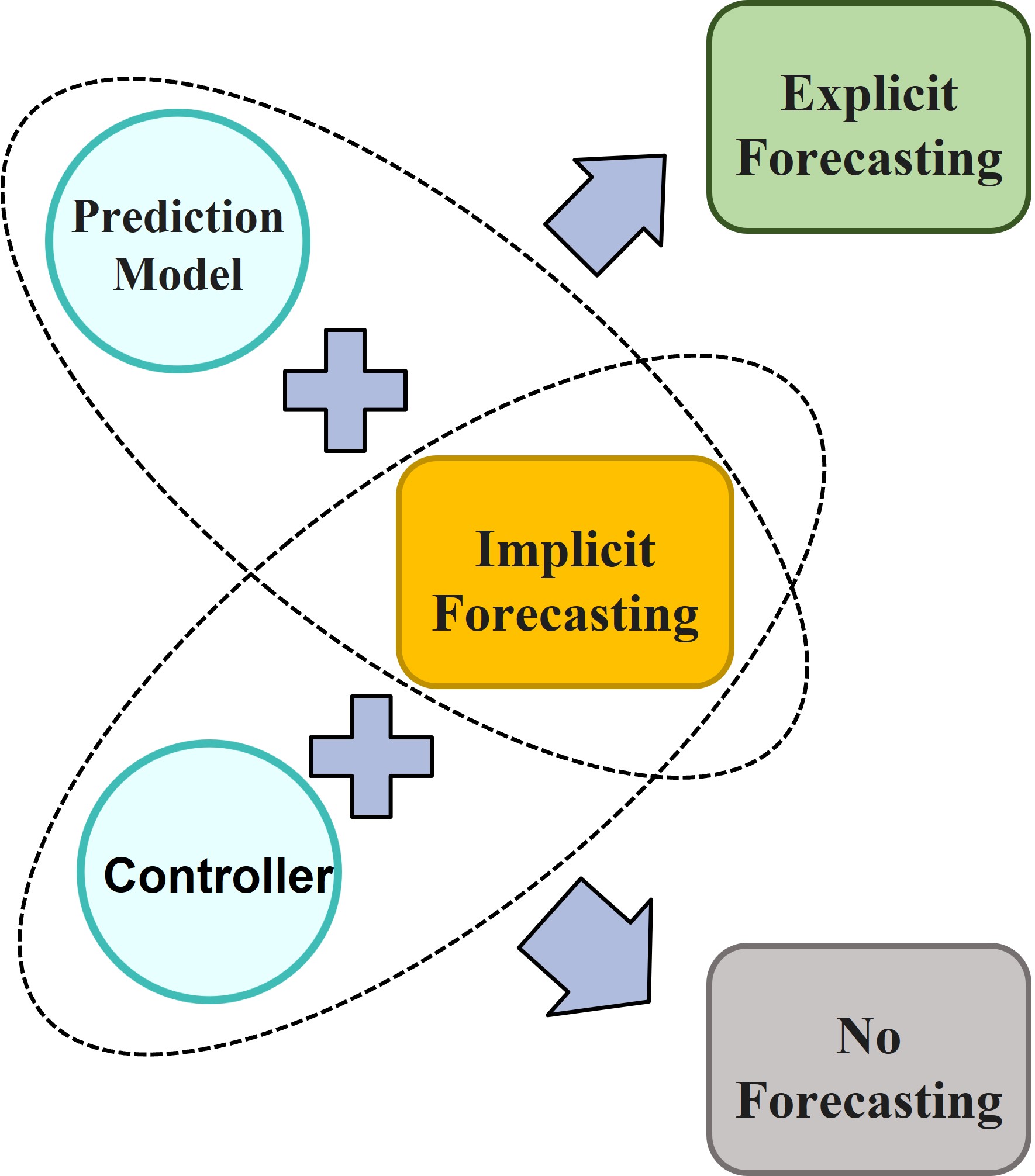}
		\caption{Relationship between these three forms of DCV}
		\label{fig:DCV}
	\end{figure}

	For ease of exposition going forward, even though numerous formulations exist for dynamic controlled variables, herein the representations under a standard form denoted is united:
		$
			c_k = h(u_k,\boldsymbol{\xi}_{[k-M,k-1]}).
		$
	Any form of DCVs can be treated as a special case of this standard formulation.
	
	\subsection{Online implementation of DCV: an implicit control policy based on DCVs}
	Next, the online implementation of dynamic self-optimizing control will be given.
	Here, an implicit control policy based on DCVs is proposed , and a theorem is given to show the generality of DCVs.
	The DCVs based policy is formulated as follows:
	\begin{equation}
		u_k = \arg_{u_k} h(u_k, \boldsymbol{\xi}_{[k-M,k-1]}) = 0
	\end{equation}
	
	As shown in Fig.~\ref{fig:DCVonline}, the online implementation of DCV encompasses a buffer stack to store extended measurements of horizon length $M$, coupled with an online equation solver that resolves the DCV system of equations in real-time, wherein the roots of the system of equations constitute the control inputs at the given instant.
	\begin{figure}[h]
		\centering
		\includegraphics[width=0.6\textwidth]{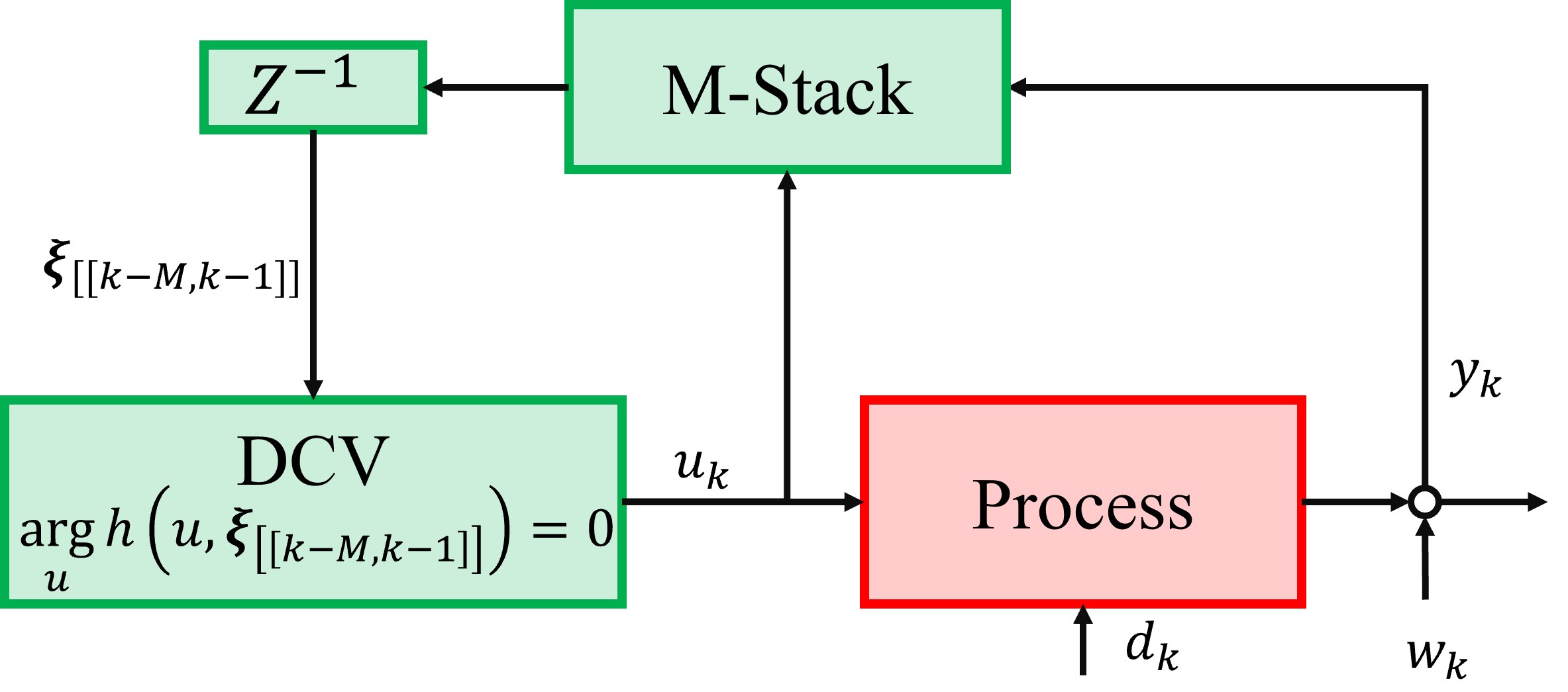}
		\caption{Online implementation structure of DCV}
		\label{fig:DCVonline}
	\end{figure}
	
	\begin{theorem}
		\label{the:DCVF}
		Let $F:\mathbb{R}^m \rightrightarrows \mathcal{Y} \subseteq \mathbb{R}^n $ be a set-valued mapping with closed graph, defined as
		\begin{align*}
			\text{gph }F = \{(\mathbf{x}, \mathbf{y}) \in \mathbb{R}^m \times \mathcal{Y} \mid \mathbf{y} \in F(\mathbf{x})\}
		\end{align*}
		and $F(\mathbf{x}) \neq \emptyset$ for all $\mathbf{x} \in \mathbb{R}^m$. Then there exists a continuous function $g:\mathbb{R}^m \times \mathbb{R}^n \rightarrow \mathbb{R}^n$ such that the solution mapping
		\begin{align*}
			S:\mathbb{R}^m \rightrightarrows \mathcal{Y}, \quad S(\mathbf{x}) = \{ \mathbf{y} \in \mathbb{R}^n \mid g(\mathbf{x},\mathbf{y}) = 0 \} 
		\end{align*}
		satisfies $S(\mathbf{x}) \neq \emptyset$ for all $\mathbf{x} \in \mathbb{R}^m$ and
		\begin{align*}
			F(\mathbf{x}) = S(\mathbf{x}), \quad \forall \mathbf{x} \in \mathbb{R}^m.
		\end{align*}
	\end{theorem}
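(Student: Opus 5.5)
Since $\text{gph }F$ is closed in $\mathbb{R}^m\times\mathbb{R}^n$, the natural candidate for $g$ is built from the distance to the graph. Define
\begin{align*}
	\rho(\mathbf{x},\mathbf{y}) = \operatorname{dist}\big((\mathbf{x},\mathbf{y}),\text{gph }F\big) = \inf_{(\mathbf{x}',\mathbf{y}')\in \text{gph }F} \|(\mathbf{x},\mathbf{y})-(\mathbf{x}',\mathbf{y}')\| ,
\end{align*}
which is well defined and finite because $\text{gph }F\neq\emptyset$ (indeed $F(\mathbf{x})\neq\emptyset$ for every $\mathbf{x}$). The plan is to set $g(\mathbf{x},\mathbf{y}) = \rho(\mathbf{x},\mathbf{y})\,\mathbf{e}_1\in\mathbb{R}^n$, where $\mathbf{e}_1$ is the first unit vector; equivalently, take $g=(\rho,0,\dots,0)^\top$. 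Any vector-valued function whose zero set equals that of $\rho$ would serve equally well. The theorem only asks for $g$ to be valued in $\mathbb{R}^n$, not for any regularity of the solution mapping, so this choice suffices.

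The key steps run in the following order. First, I will show that $\rho$ is continuous; in fact it is $1$-Lipschitz by the triangle inequality, since $|\rho(z)-\rho(z')|\le\|z-z'\|$ for any $z,z'$. Hence $g$ is continuous.

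Second, I will show that the zero set of $\rho$ is exactly the closure of $\text{gph }F$. This is the standard fact that $\operatorname{dist}(z,A)=0$ if and only if $z\in\overline{A}$. Because the graph is assumed closed, the zero set of $\rho$ equals $\text{gph }F$ itself. Consequently, for each fixed $\mathbf{x}$,
\begin{align*}
	S(\mathbf{x})=\{\mathbf{y}\mid \rho(\mathbf{x},\mathbf{y})=0\}=\{\mathbf{y}\mid(\mathbf{x},\mathbf{y})\in\text{gph }F\}=F(\mathbf{x}).
\end{align*}
Nonemptiness of $S(\mathbf{x})$ then follows immediately from the hypothesis $F(\mathbf{x})\neq\emptyset$. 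Since $F(\mathbf{x})\subseteq\mathcal{Y}$, we also get $S(\mathbf{x})\subseteq\mathcal{Y}$, which is consistent with the stated codomain of $S$.

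The only point requiring care, and thus the main obstacle, is the interpretation of ``closed graph'' when $\mathcal{Y}$ is not closed in $\mathbb{R}^n$. The zero set of a continuous $g$ on $\mathbb{R}^m\times\mathbb{R}^n$ is always closed in $\mathbb{R}^m\times\mathbb{R}^n$. So the argument needs $\text{gph }F$ to be closed in the ambient space $\mathbb{R}^m\times\mathbb{R}^n$, not merely relative to $\mathbb{R}^m\times\mathcal{Y}$. I will state explicitly that closedness is understood in $\mathbb{R}^m\times\mathbb{R}^n$, which is the standard convention for set-valued maps, e.g.\ in \cite{dontchevImplicitFunctionsSolution2014}. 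Under that reading, the proof reduces to the two elementary properties of the distance function above.

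If a smooth $g$ were preferred, a fallback is Whitney's theorem: every closed set is the zero set of a $C^\infty$ function. This theorem would give a smooth scalar $\phi$ with $\phi^{-1}(0)=\text{gph }F$, to be used in place of $\rho$. It is not needed for the continuity claim, however.
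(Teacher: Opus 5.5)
Your proof is correct, and it is more direct than the paper's. The paper never writes down $g$. It cites Theorem~1 of Florence et al.\ (Lemma~\ref{the:florence2022a}) for an $L$-Lipschitz energy $E$ with $\arg\min_{\mathbf{y}\in\mathcal{Y}}E(\mathbf{x},\mathbf{y})=F(\mathbf{x})$ and minimum value zero. It then chooses $g$ with $\|g\|=E$ and uses Lemma~\ref{lem:eqeq} to turn this argmin characterization into a root characterization. You instead take $g=\operatorname{dist}(\cdot,\text{gph }F)\,\mathbf{e}_1$ and read off its zero set directly, so no argmin-to-root conversion is needed. This also spares you Lemma~\ref{lem:eqeq}, whose written proof compares $\|c\|$ at two different base points $\mathbf{x}_1,\mathbf{x}_2$, when what is really needed is $S_{eq}(\mathbf{x}_1)\neq\emptyset$.

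The paper's route buys an explicit link between DCV root-finding and energy-based (implicit behavioral cloning) policies, which it uses later in the discussion. Yours buys a self-contained argument with an explicit $1$-Lipschitz $g$, and it exposes a hypothesis the paper leaves implicit. The statement defines $S(\mathbf{x})$ as a zero set in all of $\mathbb{R}^n$, while Lemma~\ref{the:florence2022a} only controls minimizers over $\mathcal{Y}$. So the paper's argument tacitly needs $E(\mathbf{x},\cdot)$ to have no zeros outside $\mathcal{Y}$. Your construction secures this by requiring $\text{gph }F$ to be closed in $\mathbb{R}^m\times\mathbb{R}^n$. Your remark that zero sets of continuous maps are closed shows this reading is necessary, not just convenient. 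With $\mathcal{Y}=(0,1)$ and $F\equiv(0,1)$, the graph is closed relative to $\mathbb{R}^m\times\mathcal{Y}$, yet no continuous $g$ can give $S(\mathbf{x})=(0,1)$.
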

	\begin{proof}
		See appendix.
	\end{proof}
	
	Based on Theorem~\ref{the:DCVF}, a large set of functions can be represented by DCVs, such as multi-valued functions, discontinuous functions, etc. Relative to explicit functions, DCVs possess advantages in this implicit formulation.
	While it cannot be said that implicit functions can represent more functions than explicit functions, explicit functions require a prior knowledge of the location of discontinuities or multi-values to represent such functions, whereas implicit functions do not.	
	This will also be demonstrated in the case studies.

	\subsection{Relationship between DCV and controller}
	\label{sec:DCV&controller}
	In current industrial practice and static SOC \cite{skogestad2004,halvorsen2003}, the commonly used CV is a special case of the no forecasting form, utilizing only the current measurement as the CV,
	$$c_k = E y_k , $$ where $E \in \mathbb{R}^{n_u \times n_y}$ denotes a linear combination matrix with $rank(E) = n_u$. In this form of DCV, an additional controller is usually required to maintain the DCV at the setpoint.
	
	Nearly all existing controller design methods can be considered special cases of the DCV forms. For example, a feedback control policy denoted by $u_k = \pi(\boldsymbol{\xi}_{[k-M,k-1]})$:
	\begin{align*}
		c_k &= u_k - \pi(\boldsymbol{\xi}_{[k-M,k-1]})\\
		&= h(u_k, \boldsymbol{\xi}_{[k-M,k-1]}) \\
		c_k=0 \Longrightarrow u_k = \pi(\boldsymbol{\xi}_{[k-M,k-1]})
	\end{align*}
	where $\pi$ denotes the feedback control policy.
	
	Similarly, receding horizon control or model predictive control, whose control policy can be denoted as $\boldsymbol{u}_{[k,k+H]} = \pi(\boldsymbol{\xi}_{[k-M,k-1]}, \hat{\boldsymbol{\xi}}_{[k,k+H-1]})$, with only $u_k$ actually implemented, can also be expressed as a special DCV form:
	\begin{align*}
		c_k &= h(\boldsymbol{\xi}_{[k-M,k-1]}, \hat{\boldsymbol{\xi}}_{[k,k+H-1]}) \\
		&= \boldsymbol{u}_{[k,k+H]} - \pi(\boldsymbol{\xi}_{[k-M,k-1]}, \hat{\boldsymbol{\xi}}_{[k,k+H-1]}) \\
		&\Longrightarrow \boldsymbol{u}_{[k,k+H]} = \pi(\boldsymbol{\xi}_{[k-M,k-1]}, \hat{\boldsymbol{\xi}}_{[k,k+H-1]})
	\end{align*}
	
	It can be seen from the above analysis that when the value of DCVs is specified, it can be converted with most current controller design methods. It can also be said that in the explicit forecasting form and implicit forecasting form of DCVs, the controller is implicit in the controlled variable.
	From these perspectives, the relationship between the controller and DCV is very close.	
	To understand the relationship between DCVs and controllers, it is first necessary to understand their respective concepts and functionalities.
	
	\textbf{Concepts}
	\begin{enumerate}
		\item 	Dynamic controlled Variables (DCV): A variable introduced into a dynamic system to limit its degrees of freedom and influence its behavior. It acts as an additional constraint, guiding the system towards a desired state.
		\item  Controller: An algorithm that receives system measurements and generates control inputs to manipulate the system's behavior and achieve a desired outcome.
	\end{enumerate}
	The definitions of DCVs and controllers provide a high-level overview of their key features. However, to fully understand their relationship, it is necessary to examine their functionalities in more detail.
	
	\textbf{Functionalities}
	
	The functionalities of DCVs and controllers provide a more detailed understanding of how they operate. By comparing and contrasting their functionalities, it will be seen clearly how they interact and influence each other.
	
	DCV:
	\begin{enumerate}
		\item Proactive: Shapes the system's trajectory by anticipating future states and incorporating them into control decisions.
		\item Constraint: Provides a reference output for the system, influencing its behavior without directly manipulating the system.
		\item Adaptable: Can be used with various control strategies by adjusting its definition based on the desired behavior.
	\end{enumerate}
	
	Controller:
	\begin{enumerate}
		\item Reactive: Adjusts the system's behavior based on current measurements and deviations from the desired state.
		\item Direct Manipulation: Directly affects the system's behavior by manipulating its inputs.
		\item Specific: Tailored to a specific control strategy and may not be readily adaptable to others.
	\end{enumerate}
	
	The functionalities of DCVs and controllers provide a clear contrast between the two concepts. DCVs are proactive, shaping the system's behavior by anticipating system output. Controllers are reactive, adjusting the system's behavior based on current measurements. This contrast is evident in the following table:
	\begin{table}[h]
		\caption{The difference between DCV and controller}
		\label{tab:DCV&controllrrt}
		\centering
		\begin{tabular}{lll}
			\hline
			\textbf{Feature} & \textbf{Dynamic controlled Variables} & \textbf{Controller} \\ \hline
			Function & Shapes system behavior & Manipulates system inputs \\
			Information used & Past measurements, future predictions & Past measurements \\
			Output & The value of DCV & Control inputs for the system \\
			Role & Proactive guidance & Reactive adjustment \\
			Flexibility & Adapts to different control strategies & Specific to chosen strategy \\ \hline
		\end{tabular}
	\end{table}
	
	The concept of dynamic controlled variables is emerged as a novel approach to control theory, offering an alternative perspective on shaping the behavior of dynamic systems. While traditional control methods rely on reactive adjustments based on current measurements, DCVs introduce a proactive element by incorporating predictions of future output into the control loop. This proactive approach opens up new possibilities for achieving efficient, precise, and predictable control.
	
	\subsection{Relationship between DSOC and Sliding mode control}
	Sliding mode control (SMC) is a nonlinear control technique that belongs to the broader category of variable structure control systems. The key idea in SMC is to drive the system state trajectory onto a prescribed surface in the state space, known as the sliding surface, and then maintain the state trajectory on this surface. This is achieved by means of a discontinuous control law that switches the control structure.
	
	The sliding surface is a manifold to which, when the system trajectory is constrained in this manifold, the system exhibits the desired behavior. 
	The sliding surface is typically defined as a hyperplane of the state variables, or a specified nonlinear function of specified state variables. Some research has also extended the sliding surface to include integral functions of the state variables. In terms of form, the nonlinear structure of the sliding surface is limited to some manually designed forms, without a generalized approach for designing nonlinear sliding surfaces.
	Furthermore, DCVs explicitly incorporate predicted variables and control inputs, which the sliding surface does not include. The integral terms in the sliding surface can be considered as representing past trajectories, and thus the sliding surface can be viewed as DCVs with no forecasting form.
	
	The sliding surface merely describes the ideal region, without specifying requirements on how to reach it or how to maintain the system within it. 	
	To implement sliding mode control, an additional controller needs to be designed to ensure the system trajectory intersects with the sliding surface and ultimately remains confined to it. The key distinction is that DCVs inherently contain an implicit control law within their formulation, without requiring additional explicit controller design. This means that DCVs describe the ideal manifold accessible from any initial state, whereas the sliding surface only captures the desired target manifold to be reached.
	
	The differences between sliding mode control and dynamic self-optimization are finally summarized in Table \ref{tab:SMC&DSOC}.
	\begin{table}[ht]
		\caption{The differences between sliding mode control and dynamic self-optimizing control}
		\centering
		\label{tab:SMC&DSOC}
		\begin{tabular}{lll}
			\hline
			 \textbf{Aspect} &  \textbf{Sliding Mode Control}&  \textbf{Dynamic SOC}\\ \hline
			 Control Law&
			 \begin{tabular}[c]{@{}l@{}}Explicitly designed control law \\ to drive system to sliding surface\end{tabular}&
			 \begin{tabular}[c]{@{}l@{}}Implicit control law embedded \\ in DCVs\end{tabular}\\
			 Desired Behavior&  Confined to sliding surface    &  Maintaining DCVs constant                 \\
			 Manifold Description&
			 \begin{tabular}[c]{@{}l@{}}Sliding surface specifies desired \\ target manifold\end{tabular}&
			 \begin{tabular}[c]{@{}l@{}}DCVs describe near-optimal \\ manifold from any initial state\end{tabular}\\
			 Transient Response&
			 Reaching phase and sliding phase& No explicit separation
			 \\ \hline
		\end{tabular}
	\end{table}
	In summary, dynamic SOC originates from the perspective of control structure design, whereas SMC does not consider control structure design. By approaching from the control structure design perspective, dynamic SOC introduces novel ideas that are more comprehensive. This also necessitates a holistic consideration of the system's dynamic behavior in dynamic SOC.

	\section{Self-optimizing DCVs design}
	\label{sec:DSOC}
%
	
	\subsection{The requirements of self-optimizing DCVs}\label{sec:req}
	In closed-loop control, the CVs $c_k$ are maintained at the constant set-points $0$. However, operation may be non-optimal with a positive loss due to two types of errors:
	\begin{enumerate}
		\item Setpoint error ($e_{cs,k}$): The difference between the setpoint $\hat{c}_k$ and the truly optimal $c_k$ value ($c_{opt,k}$) under current conditions at time $k$. This stems from changes in operating conditions and the system dynamic.
		\item Implementation error ($e_{tc,k}$): The difference between the actual $c_k$ value and its setpoint $0$. This results from measurement noise, control performance limitations, etc.
	\end{enumerate}
	The overall error is the sum:
	\begin{equation}
		e_{c,k} = e_{cs,k} + e_{tc,k}
	\end{equation}
	This overall error causes suboptimal operation and economic loss.
	
	From this, Skogestad and Postlethwaite \cite{skogestad2005,skogestad2000} state some requirements for good CVs:
	\begin{enumerate}
		\item The CV should be easy to control, that is, the inputs $u$ should have a significant effect (gain) on $c$.
		\item The optimal value of $c$ should be insensitive to disturbances.
		\item The CV should be insensitive to noise.
		\item In case of several CVs, the variables should not be closely correlated.
	\end{enumerate}
	These requirements are posed for static optimization problems, and tend to be an engineering definition, not extremely rigorous. Next, the requirements for dynamic controlled variables will be given from the perspective of reducing both setpoint error and implementation error.
	
	\begin{requirement}
		The CV's optimal value $c_{opt,k}$ should be insensitive to disturbances and operating changes. In other words, it should be time invariant.
	\end{requirement}
	A good CV's optimal value should remain fixed over time and unaffected by both internal and external changes.
	A setpoint error index $\alpha$ is defined to quantify the expected distance between the optimal value of DCVs and selected fixed setpoints:
	\begin{equation}
		\alpha = \underset{\boldsymbol{d},\boldsymbol{w},{x}_0}{\mathbb{E}} \left\{ \| c_{opt,k}-\hat{c}_k \| \right\} = \underset{k}{\mathbb{E}} \left\{ \| c_{opt,k}-\hat{c}_k \| \right\}
	\end{equation}
	As such, $\alpha$ provides a metric to guide selection and design of dynamic controlled variables. Comparing CV choices based on their $\alpha$ values reveals which variables exhibit the greatest invariance.	

	\begin{requirement}
		The DCV should be insensitive to noise.
	\end{requirement}
	Let the DCV mapping be defined as $c_k = h(u_k, \boldsymbol{\xi}_{[k-M,k-1]})$, where $u_k$ and $\boldsymbol{\xi}_{[k-M,k-1]}$ contain measurement noise. Let $u_{g,k}$ and $\boldsymbol{\xi}_{g,[k-M,k-1]}$ denote the noiseless perfect measurements. Then the noise insensitivity requirement can be formalized as:
	\begin{equation}
		\beta =  \underset{\boldsymbol{w}}{\text{var}}  \left\{h(u_k,\boldsymbol{\xi}_{[k-M,k-1]}) - h(u_{g,k},\boldsymbol{\xi}_{g,[k-M,k-1]})\right\}
	\end{equation}
	If $\beta$ is sufficiently small, this ensures the deviations in the controlled variable can be constrained within a sufficiently small range despite noise in the measurements used to compute it. Thus the CV can remain immune to typical measurement errors and fluctuations, providing a steady reference signal.
	
	\begin{requirement}
		The DCV should be easy to control accurately. 
	\end{requirement}
	In order to qualify this requirement, two indices are introduced to measure the “difficulty” of achieving accurate control for DCV.
	Before giving the definition of two indices, the Jacobian matrix $J({\ell})$ is introduced:
	$$J({\ell}) = \frac{\partial c_k}{\partial \boldsymbol{u}_{[k-\ell,k]}}$$
	and the rearranged Jacobian matrix $J_r({\ell})$:
	$$ J_r({\ell}) =\left[
	\begin{array}{c}
		\frac{\partial c_k}{\partial{u}_{k}} \\
		\frac{\partial c_k}{\partial{u}_{k-1}} \\
		\vdots \\
		\frac{\partial c_k}{\partial{u}_{k-\ell}} 
	\end{array} 
	\right] $$

	First is the DCV index $\ell$ which measure the distance from a CV to its related control input.
	The index is a nonnegative integer that provides useful information about the mathematical structure and potential complications in the analysis and the implementation issue of DCV.
	\begin{definition} [Index $\ell$ of DCV]
		The index $\ell$ of a dynamic controlled variable $c_k$ is defined as the minimum non-negative integer $\ell$ such that the Jacobian matrix:
		$J({\ell}) $
		has full row rank. That is:		
		$$\text{rank}(J({\ell})) = \text{dim}(c_k) = n_u$$
		and the rearranged Jacobian matrix 
		$ J_r({\ell})$
		has full column rank. That is:		
		$$\text{rank}(J_r({\ell})) = \text{dim}(u_k) = n_u$$
	\end{definition}
	
	The index $\ell$ conveys a key structural property - specifically, the shortest pathway for inputs to independently influence the dynamic controlled variables. By taking the partial derivative of current CVs $c_k$ with respect to windows of prior inputs $u_{[k-\ell,k]}$, the Jacobians reveal directions of sensitivity and coupling.
	The full row rank condition of $J({\ell}) $ implies every CV element is affected by at least some input. This is the reachability requirement - no CV decoupled from all inputs.
	Meanwhile, the full column rank condition of $J_r({\ell})$ ensure each input dimension impacts some CV. This is the independence need - the ability to actuate CVs along any input direction.
	Together, the index quantifies a controllability notion - the minimum temporal distance an input change takes to observe its effect. Low index signify tighter input-CV connections. An index of 0 would mean current CVs directly depend on the current input. This index also conveys system inertia or delay - higher values mean longer lead time for inputs to impact outputs.
	
	By distilling dynamic coupling into a single non-negative integer, the index provides invaluable information about the inherent structure relating inputs and controlled variables. This facilitates analysis and design of control schemes to achieve desired CV behavior.
	
	
	A large index $\ell$ makes DCV control more challenging, similar to controlling systems with long time delays.
	A DCV with a large $\ell$ implies anticipating and accounting for uncertainties further into the future when determining the current input $u_k$. This higher dimensionality of future uncertainties poses difficulties in balancing their effects to achieve proper DCV tracking over time.
	In summary, a large DCV index $\ell$ corresponds to long control delays, complicating the selection of $u_k$ to maintain good performance. A small $\ell$ is thus preferred for tractable DCV control.
	
	The second is a gain index $\gamma$ of DCV:
	\begin{definition}
		The gain index $\gamma$ of DCV is defined as the minimum singular value of rearranged Jacobian matrix $J_r({\ell})$:
		\begin{equation}
			\gamma = \min \sigma( J_r({\ell})) 
		\end{equation}
		Where $J_r({\ell})$ is the rearranged Jacobian matrix from the CV index definition, and $\sigma( J_r({\ell}))$ denotes its singular values. 
	\end{definition}
	A higher minimum singular value implies higher gain - each input direction strongly influences the controlled variables.
	This helps ensure that control calculations are well-conditioned with substantial actuation in all dimensions. Furthermore, a larger gain index $\gamma$ reinforces noise attenuation through feedback control. By closing loops with high $\gamma$, variability in CVs gets suppressed, tightening distributions about setpoints.
	
	In summary, a large $\gamma$ index signifies both sensitive actuation and enhanced noise rejection. Together with a small CV index $\ell$ for direct feedthrough, the control scheme can rapidly compensate deviations. This allows accurately regulating the CV trajectory with minimal implementation errors. 

	

	\subsection{DCV design viewed as a map identification problem}
	In current self-optimizing controlled variable design methods, global self-optimizing control approaches \cite{Ye2015} start from optimal data and attempt to identify self-optimizing controlled variables from the data. Ye \cite{ye2023} discusses the global SOC method from a more general perspective, proposing g2SOC which views controlled variable design as a map identification problem. Inspired by this idea, this paper also views DCV design as a map identification problem. 
	So far, most self-optimizing control methods discuss controlled variables that are linear combinations of measurements. Recently, some articles discuss nonlinear controlled variables, for example, Ye \cite{ye2023} and Su \cite{su2022a} discuss the need to introduce nonlinear controlled variables in the presence of active constraint switching, but these discussions are still under the steady-state optimization framework. In dynamic optimization problems, switching between different active constraints is very common. And for problems like batch process control, there is no steady-state and the process model is usually highly nonlinear and hard to approximate with a linear model. 
	Therefore, DCV is necessary to be expressed in the nonlinear form. In this paper, general function approximators, such as a deep ReLU-MLP network, are used to parameterize the DCV:
	\begin{equation}
		c_k = h_{\theta}(u_k,\boldsymbol{\xi}_{[k-M,k-1]})
	\end{equation}
	where $h_{\theta}()$ represents the function approximator with parameters $\theta$.
	
	{Not only that, but current self-optimizing control methods for dynamic optimization are tailored to specific dynamic optimization problems, such as batch process optimization with fixed time horizons\cite{ye2018}. They are unable to address dynamic optimization problems with non-fixed time horizons. In this paper, a framework similar to approximate optimal control \cite{alamir2021} is subsequently adopted to design dynamic controlled variables, overcoming the limitations of existing methods in this regard.}
			
	Having formulated self-optimizing DCV design as a map identification problem, two key questions follow: How to construct a dataset and how to train the DCV. The following subsections will address these two important questions for enabling data-driven DCV design through map identification.
	\subsubsection{Dataset Generation}
	In Section \ref{sec:ipp}, two classes of measurement-based optimizing control approaches were discussed, distinguished by whether an explicit process model is utilized. The DCV formulation can accommodate both approaches. Inspired by related work \cite{alamir2021}, here is a proposed algorithm to construct a learning dataset to fit the map $h$:
	
	\begin{algorithm}[ht]
		\caption{Training Dataset Generation}
		\label{alg:DCV_data}
		\KwIn{ Optimization problem $\mathcal{P}$ \eqref{eq:dyopt}, system model $\mathcal{M}$ \eqref{eq:sys}, \eqref{eq:sys_m}, Predictive Model $\mathcal{\hat{M}}$ \\
			Observation horizon $M$, Number of samples $N$,  Prediction Horizon $H$, Nominal disturbance $\boldsymbol{d}_0$}
		\KwOut{Dataset $\mathfrak{D}$}
		\BlankLine
		\ForEach{$i $ in  $1$ \KwTo $N$}{
			Randomly sample $x_0 \in \mathcal{X}$ and $\boldsymbol{d} \in \mathcal{D}^{L}$, 
			$q \leftarrow (x_0, \boldsymbol{d})$
			
			$\boldsymbol{u}^\star \leftarrow \text{Solve } \mathcal{P}(q) \text{ using NLP solver (e.g. IPOPT-Casadi \cite{andersson2019})}$
			
			Randomly sample $\boldsymbol{w} \in \mathcal{W}^{L} $, 
			
			$\boldsymbol{\xi}^\star \leftarrow \text{Simulate } \mathcal{M}(q, \boldsymbol{u}^\star(q),\boldsymbol{w}) $
			
			\ForEach{$k $ in $ M$ \KwTo $L-H$}{
				$\boldsymbol{\hat{\xi}}^\star \leftarrow \emptyset$
				
				\If{$H>0$}
				{
					$q_0\leftarrow (x_0, \boldsymbol{d}_0)$ , $\boldsymbol{w} \leftarrow 0$
					
					$\boldsymbol{\hat{\xi}}^\star_{[k,k+H-1]}) \leftarrow \text{Simulate }  \mathcal{\hat{M}}(\boldsymbol{\xi}^\star_{[k-M,k-1]},\boldsymbol{u}_{[k,k+H-1]}^\star)$
					
				}
				
				$\mathfrak{D} \leftarrow \mathfrak{D} \cup {({u}_k^\star, \boldsymbol{\xi}^\star_{[k-M,k-1]}, \boldsymbol{\hat{\xi}}^\star_{[k,k+H-1]})}$
			}

		}
		\Return{Dataset $\mathfrak{D}$}
	\end{algorithm}
	
	In Algorithm \ref{alg:DCV_data}, when solving the optimal control problem, all uncertainties are assumed to be perfectly known, so the obtained solution $\boldsymbol{u}^\star(q)$ is the ideal optimal solution.
	
	\begin{figure}[h]
		\centering
		\includegraphics[width=0.6\textwidth]{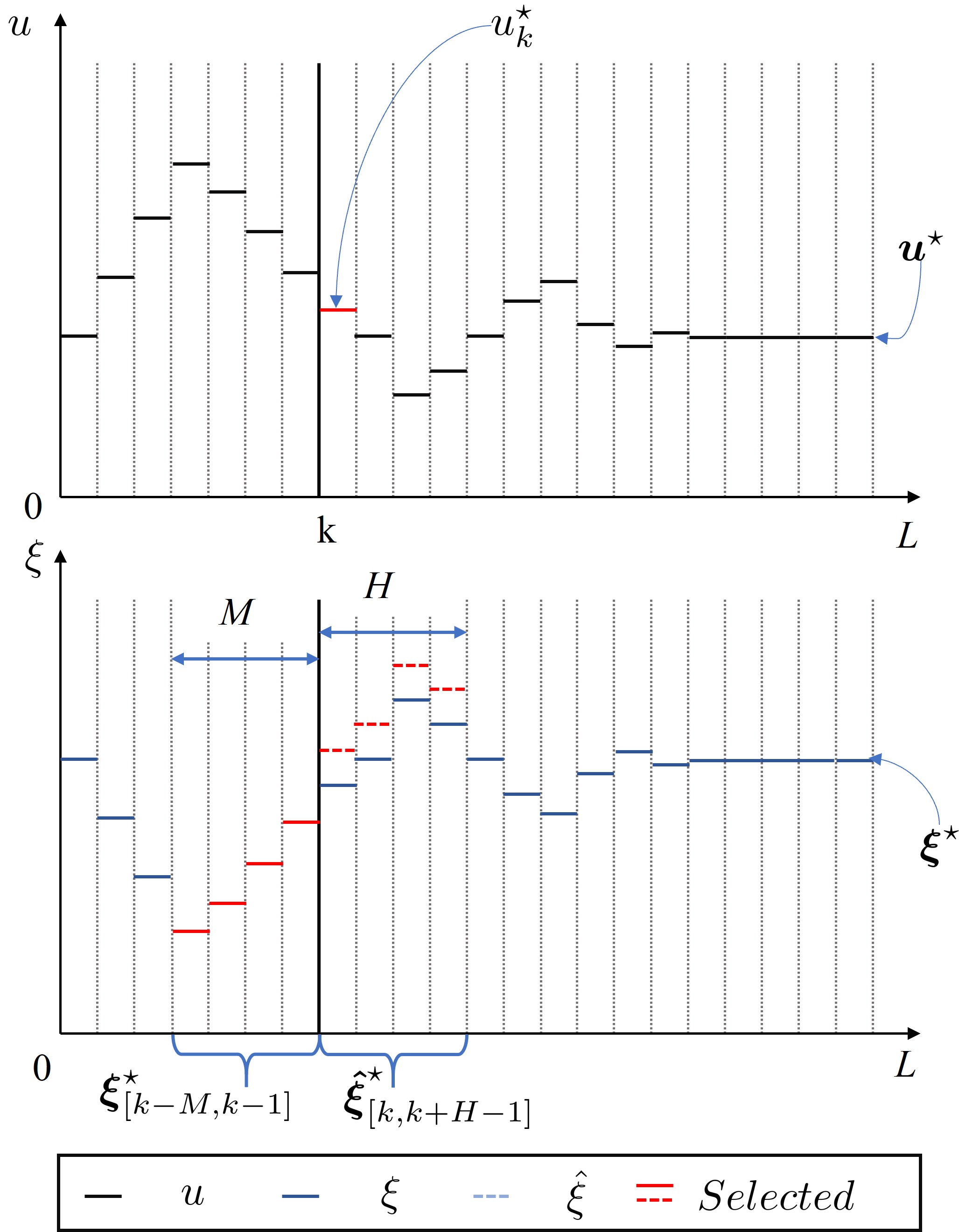}
		\caption{Composition of a sample }
		\label{fig:data_sample}
	\end{figure}
	Figure \ref{fig:data_sample} illustrates the composition of each data sample, consisting of the optimal control input ${u}_k^\star$ paired with the extended observations $\boldsymbol{\xi}_{[k-L,k-1]}^\star$ and estimated future observations $\boldsymbol{\hat{\xi}}_{[k,k+H-1]}^\star$. 

	\subsubsection{DCV model training}	
	The four indices defined in Section~\ref{sec:req} to formalize the three requirements for good DCV pose a multi-objective optimization challenge with complex trade-offs in simultaneously satisfying all the requirements. Rather than hand-tuning selector weights, a consolidated approach based on the following insight is proposed:
	For small variations around the optimal control input $u^{\star}_k$,  the relationship between $u_k$ and any candidate controlled variable set $c_k$ can be linearized as:
	\begin{equation}
		c_k - c^{\star}_k = G_k   (u_k - u^{\star}_k)
	\end{equation} where $G_k = \frac{\d c_k}{\d u_k}\bigg|_{u_k = u^{\star}_k}$ and $c_{k}^{\star} = h(u_{k}^{\star},\boldsymbol{\xi}^\star_{[k-M,k-1]})$.
	Rearranging gives:
	\begin{equation}
		u_k - u^{\star}_k = G_k^{-1} (c_k - c^{\star}_k)
	\end{equation}
	Since $(u_k - u^{\star}_k)$ is expected to be as small as possible, it follows that selecting $c_k$ to minimize the product of $\left(\frac{\d c_k}{\d u_k}\right)^{-1}$ and $(c_k - c^{\star}_k)$. 
	An enough small $\left(\frac{\d c_k}{\d u_k}\right)^{-1} (c_k - c_k^\star)$ can meet the three requirements because:
	\begin{enumerate}
		\item Small norm of $c_k - c_k^\star = (c_k-c_{g,k})+(c_{g,k}- c_k^\star)$ implies: 
		\subitem(a) $c_{g,k} - c_k^{\star}$ has small norm, representing the small distance between no-noisy values of controlled variables to $c_k$, satisfying Requirement 1
		\subitem(b) $(c_k-c_{g,k})$ has small norm, representing the small change of $c_k$ under noise,  satisfying Requirement 2.
		\item Small $G_k^{-1}$ implies:
		\subitem(a) $G_k$ is invertible, meaning the CV index $\ell$ is 0 to meet Requirement 3.
		\subitem(b) $G_k^{-1}$ has small norm, i.e. large norm of $G_k$, meaning gain index $\gamma$ is large, also satisfying Requirement 3.
	\end{enumerate}
	
	In the face of uncertainties, the performance of DCVs necessitates a more nuanced evaluation. One approach leverages the worst-case loss, while another harnesses the statistical properties of the loss distribution, such as the expected loss over all uncertainties.
	In this paper, without loss of generality, further assuming the controlled variable is consistently maintained at $0$ in closed-loop, i.e. $c_k = 0$, then the following loss function could be employed to evaluate the DCV's performance
	\begin{equation}
		\label{eq:dsocLoss0}
		\mathcal{L} = \underset{\mathcal{X}_0,\mathcal{D},\mathcal{W}}{\mathbb{E}} \left\{ \sum_{i=0}^{L-1} \left\|G_k^{-1} c_k^\star \right\|\right\} 	
	\end{equation}
	\begin{remark}
		If the DCV is the explicit forecasting form, 
		$$G_k = \frac{\d c_k}{\d u_k}\bigg|_{u_k = u^{\star}_k} = \frac{\partial c_k}{\partial u_k}\bigg|_{u_k = u^{\star}_k} + \frac{\partial c_k}{\partial \boldsymbol{\hat{\xi}}_{[k,k+H-1]}}\frac{\partial  \boldsymbol{\hat{\xi}}_{[k,k+H-1]}}{\partial u_k}\bigg|_{u_k = u^{\star}_k}$$
	\end{remark}
	So the self-optimizing DCVs training problem can be summary as follows:
	\begin{equation}
		\begin{aligned}
			\min_{\theta} \mathcal{L} =\underset{\mathcal{X}_0,\mathcal{D},\mathcal{W}}{\mathbb{E}} \left\{ \sum_{i=0}^{L-1} \left\|G_k^{-1} c_k^\star \right\|\right\}\\
			\text{s.t.} c_k = h_{\theta}(u_k,\boldsymbol{\xi}_{[k-M,k-1]})
		\end{aligned}
	\end{equation}
	Usually, the Monte Carlo sampling is used to estimate $\mathcal{L} $
	\begin{equation}
	\label{eq:dsocLoss}
	\mathcal{L} = \underset{\mathcal{X}_0,\mathcal{D},\mathcal{W}}{\mathbb{E}} \left\{ \sum_{i=0}^{L-1} \left\|G_k^{-1} c_k^\star \right\|\right\} 	 \approx \frac{1}{N} \sum_{i=1}^{N} \sum_{k=0}^{L-1} \left\|G_{i,k}^{-1}  c_{i,k}^\star \right\|
	\end{equation}
	where $c_{i,k}^\star$ and $G_{i,k}^{-1}$ are the optimal DCV value and gain matrix at time instant $k$ under $x_{i,0}$, $\boldsymbol{d}_i$ and $\boldsymbol{w}_i$ where are Monte Carlo samples over $\mathcal{X}_0$, $\mathcal{D}$, $\mathcal{W}$, and $N$ is the number of samples.
	
	In the above, matrix inversion is involved, which can make DCV training difficult and computationally expensive for large datasets or high-dimensional control inputs. Also, since it only contains information about the optimal point and vicinity, the controlled variable values near non-optimal inputs are not constrained. This can lead to points where the DCV is 0 arising in these regions. During online inference, this may cause the DCV policy to produce not optimal control inputs. 
	Motivated by this, the following improved loss function is proposed:
	\begin{equation}
		\label{eq:dsocLoss2}
		\mathcal{L} 	 
		\approx \frac{1}{N} \sum_{i=1}^{N} \sum_{k=0}^{L-1}  \left(
		 \left\| c_{i,k}^\star \right\| 
		+  \sum_{j=1}^{N_c} \left\|\tau(u_{i,k}^\star-u_{i,k,j})-  c_{i,k,j} \right\| 
		\right)
	\end{equation}
	 where $c_{i,k,j} = h(u_j,\boldsymbol{\xi}^\star_{[k-M,k-1]})$ is the controlled variable value at disturbance $i$, time $k$ and input $u_j$. $N_c$ is the number of collocation point.
	 $\tau$ is a hyperparameter for the desired gain.
	 In this loss function, the corresponding negative samples are introduced to pair with the positive samples, in order to inject global information into the DCV function. The hyperparameter $\tau$ introduced in \eqref{eq:dsocLoss2} represents the expected Lipschitz constant of the DCV with respect to $u_k$. A larger $\tau$ implies a desire for a larger minimum singular value of $G_k^{-1}$. The negative samples are randomly sampled within a sphere centered at ${u}_{i,k}^\star$ with radius $r$, i.e. $\rho_r({u}_{i,k}^\star,r)$. In other words, the sampling is performed within a certain range around the optimal input.
	 
	 The training problem for the DCV can be reformulated as a regression problem, with the inputs being $u_k$ and $\boldsymbol{\xi}^\star_{[k-M,k-1]})$, and the output being the DCV value. So the DCV training can be carried out using regression training methods, leveraging established tool kits.

	{The self-optimizing DCV training procedure is summarized in Algorithm~\ref{alg:dcv}.}
	\begin{algorithm}
		\caption{Contrastive learning for DCV model training}\label{alg:dcv}
		\KwIn{Dataset $\mathfrak{D}$, deep neural network $h_\theta(.)$ , $\tau$ , $r$ and $N_c$}
		\KwOut{The trained deep neural network $h_{\hat{\theta}}(.)$}
		$\mathfrak{D}_{\text{neg}}\leftarrow \emptyset$
		
		$\mathfrak{D}_{\text{pos}}\leftarrow \emptyset$
		
		\ForEach{$({u}_{i,k}^\star, \boldsymbol{\xi}^\star_{i,[k-M,k-1]}$ in $\mathfrak{D}$}{
			\For{$j=1,2,…,N_c$}{
			$u_{i,k,j} \leftarrow$ Randomly sample in $\rho_r({u}_{i,k}^\star,r)$  
			$\mathfrak{D}_{\text{neg}}\leftarrow \mathfrak{D}_{\text{neg}} \cup (\tau(u_{i,k,j}-{u}_{i,k}^\star),[u_{i,k,j};\boldsymbol{\xi}^\star_{i,[k-M,k-1]}])$
			}
			$\mathfrak{D}_{\text{pos}}\leftarrow \mathfrak{D}_{\text{pos}} \cup (0,[u_{i,k,j};\boldsymbol{\xi}^\star_{i,[k-M,k-1]}])$
		}
		
		$\mathfrak{D}\leftarrow \mathfrak{D}_{\text{pos}} \cup  \mathfrak{D}_{\text{neg}}$ 	
			
		$\hat{\theta} \leftarrow \text{regression trainer}(\mathfrak{D})$
		
	\end{algorithm}
	
	\begin{figure}[ht]
		\centering
		\begin{subfigure}[b]{0.45\textwidth}
			\includegraphics[width=1\linewidth]{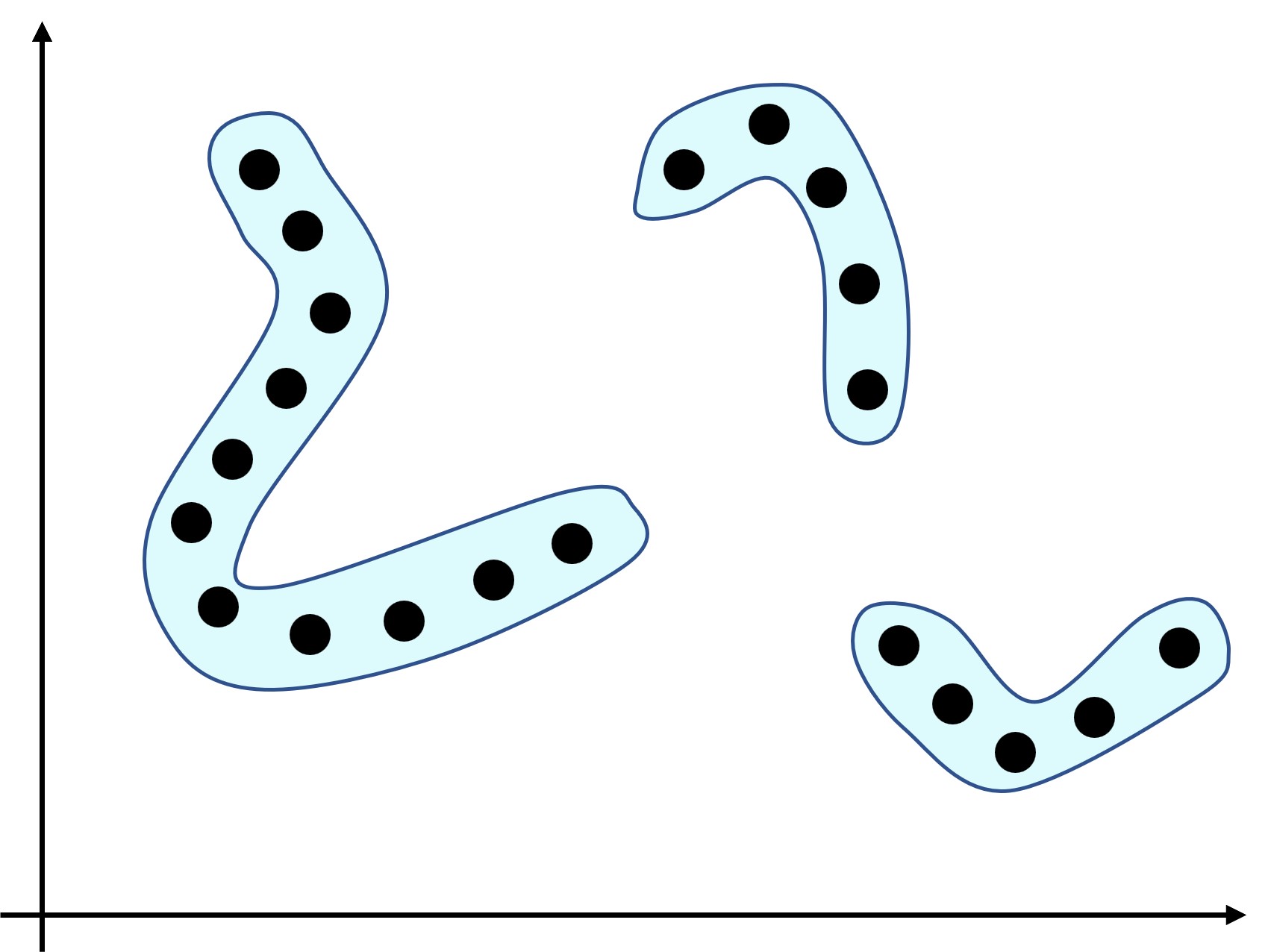}
			\caption{Local regularization learning}
			\label{fig:regLearning}
		\end{subfigure}
		\hfill
		\begin{subfigure}[b]{0.45\textwidth}
			\includegraphics[width=1\linewidth]{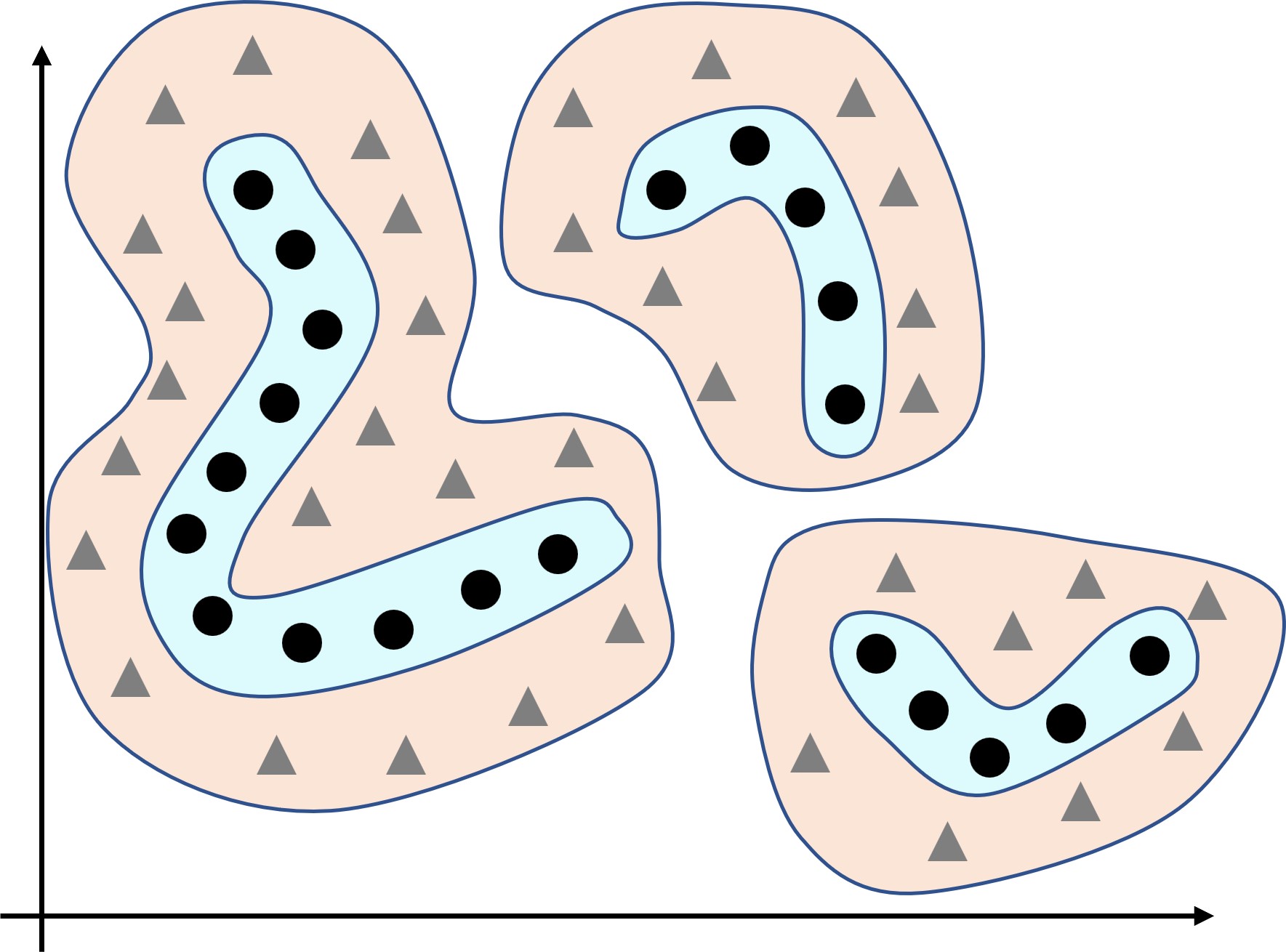}
			\caption{Contrastive learning}
			\label{fig:conLearning}
		\end{subfigure}
		\caption{Comparison of two loss function}
		\label{fig:dcvLearning}
	\end{figure}
	{Fig. \ref{fig:dcvLearning} demonstrates the distinctions in the information learned by the two loss functions. The black dots in the figures represent the positive samples, i.e., the optimal data. The gray triangles denote the negative samples. The light blue regions indicate the areas learned to have $c=0$. The pink regions represent the areas that the required negative samples aim to capture, where $c$ takes a specified value.}
	
	{Fig. \ref{fig:regLearning} indicates that training using \eqref{eq:dsocLoss} can only capture the information in the vicinity of the samples, as the loss function only imposes constraints on the sample points and their derivatives. Regions not covered by the samples lack relevant information. }
	
	{Fig. \ref{fig:conLearning} shows that training using \eqref{eq:dsocLoss2} introduces negative samples, i.e., control inputs that do not match the optimal trajectory, in addition to the optimal point data. This allows the model to learn more global information, beyond just the sample points and their neighborhoods. }
	\section{Case study}
	\label{sec:CS}
	It is worth noting that dynamic optimization problems are often intractable to directly solve when  $L =\infty$ or the terminal time $L$ is not fixed. For infinite-horizon problems, only LQR problems admit analytical optimal solutions, while general nonlinear processes typically lack closed-form optima. Common approaches are based on approximating the cost-to-go via Bellman's optimality principle, then solving a fixed finite-horizon problem. Model predictive control and approximate dynamic programming exemplify such methods. For problems with free terminal time, the switching or final time is usually introduced as a decision variable by input parametrization, transforming the problem into a nonlinear program. Current research on dynamic self-optimizing control has yet to address these two problem classes.
	
	In this article, the following cases are studied to demonstrate DCV design:
	
	\begin{enumerate}[\qquad(a)]
		
		\item A toy example showing DCVs are better suited for multi-valued and discontinuous functions compared to explicit function forms.
		
		\item A batch reaction process illustrating DCV design for problems without fixed time domains.
		
		\item Two stirred tank reactor examples for DCV design in infinite-horizon problems.
		
	\end{enumerate}
	
	The case studies aim to address key dynamic optimization problem classes and highlight the advantages of the DCV approach. 
	{All the results presented in this paper were computed on a computer with a 2.90GHz processor and 8.00GB of RAM. All the results are in https://github.com/Daakuang/DCV-SOC.}
	
	\subsection{Case 1: Toy case}
	
	This case study is constructed to compare the performance of DCV's implicit inference ($\arg_y h(x,y)=0$) and explicit inference ($y=f(x)$) using neural networks. The goal is to approximate discontinuous and multivalued functions, which are challenging for standard explicit inference.
	
	2D sample points $(x, y)$ are generated from two target functions:
	\begin{enumerate}
		\item Multivalued function: $x^2 - y^2 = 1$
		\item Discontinuous function:
		$$
			y = \begin{cases}
				3x + 10, & x \geq 0 \\
				3x - 10, & x < 0
			\end{cases}
		$$
	\end{enumerate}
	
	For function 1, 500 points are sampled in the domain $x \in [-10,-1] \cup [1,10]$, $y=\pm \sqrt{x^2-1}$. For function 2,  500 points are sampled in the domain $x \in [-10,10]$.
	
	{It is trained two fully-connected neural networks with 3 hidden layers of width 10 using the Adam optimizer using Eq.~\eqref{eq:dsocLoss}.} 
	
	For implicit inference, the network predicts $h(x,y)$ and is trained to minimize $\left|\frac{\partial h}{\partial x} h(x,y) \right|$. For explicit inference, the network predicts $f(x)$ and is trained to minimize MSE between $y$ and $\hat{y} = f(x)$. Both networks are trained for 5000 epochs.
	
	The inference results are visualized in Fig.~\ref{fig:toyDCV}. Fig.~\ref{fig:c1} and Fig.~\ref{fig:c2} contains training samples (blue X), predicted values (orange +), and the background color indicates the value of $c$.  Fig.~\ref{fig:c1c} and Fig.~\ref{fig:c2c} contains training samples (blue X), predicted values (orange +).
	\begin{figure}[h]
		\centering
		\begin{subfigure}[b]{0.45\textwidth}
			\centering
			\includegraphics[width=\textwidth]{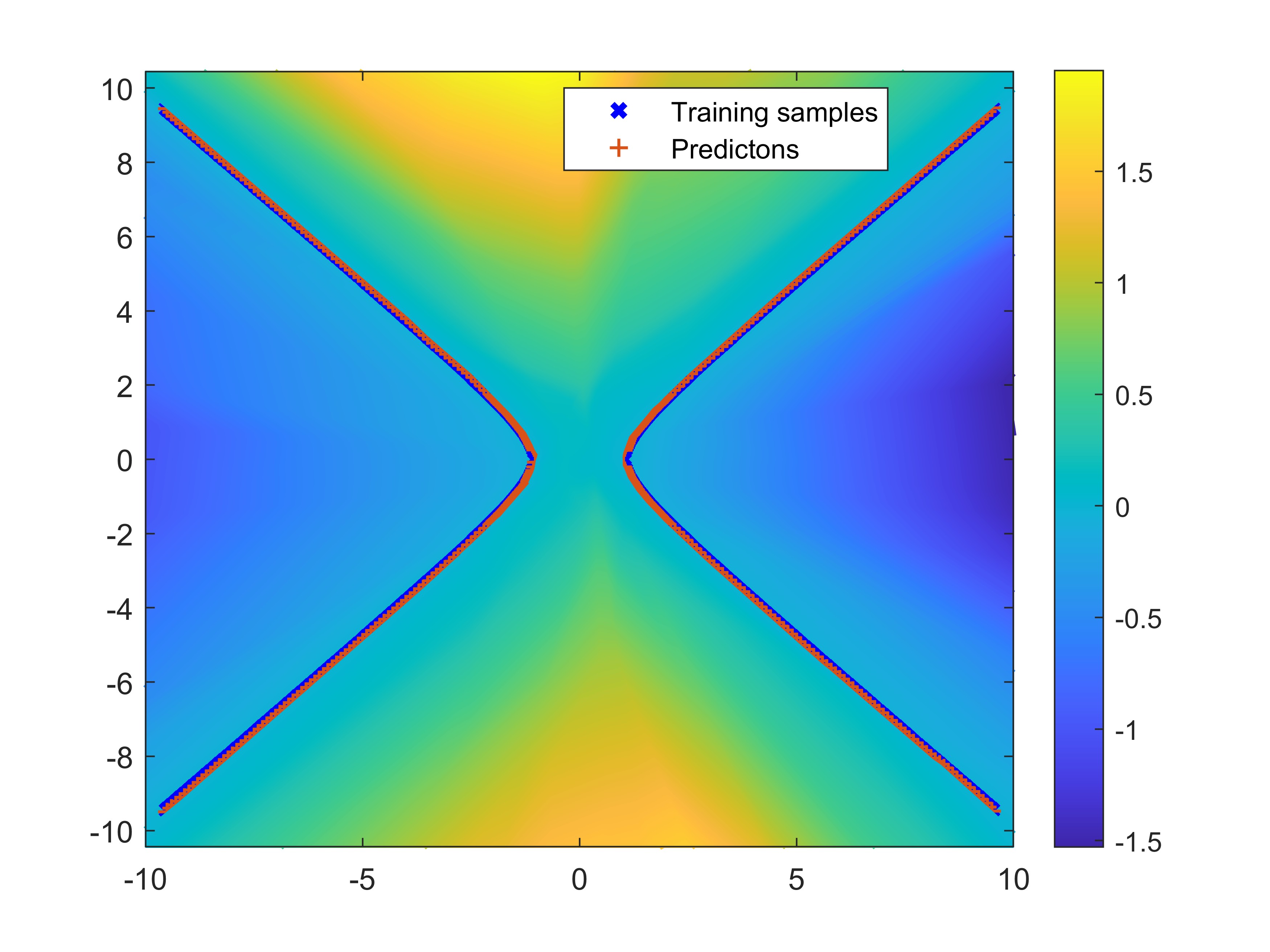}
			\caption{DCV: $x^2-y^2=1$}
			\label{fig:c1}
		\end{subfigure}
		\begin{subfigure}[b]{0.45\textwidth}
			\centering
			\includegraphics[width=\textwidth]{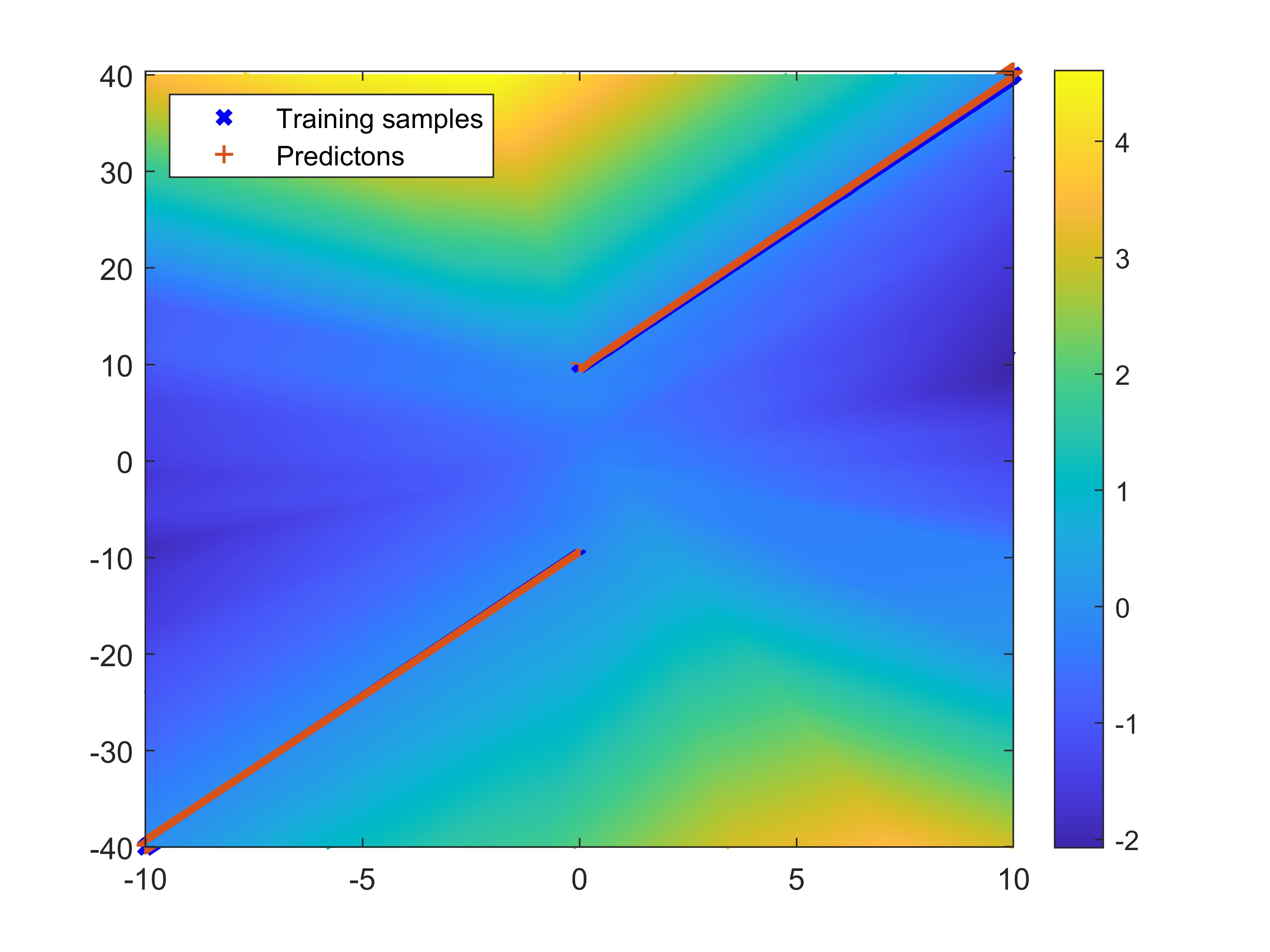}
			\caption{DCV: $y=3x+10,x\geq0;$ $ y=3x-10,x\leq0$}
			\label{fig:c2}
		\end{subfigure}
		\begin{subfigure}[b]{0.45\textwidth}
			\centering
			\includegraphics[width=\textwidth]{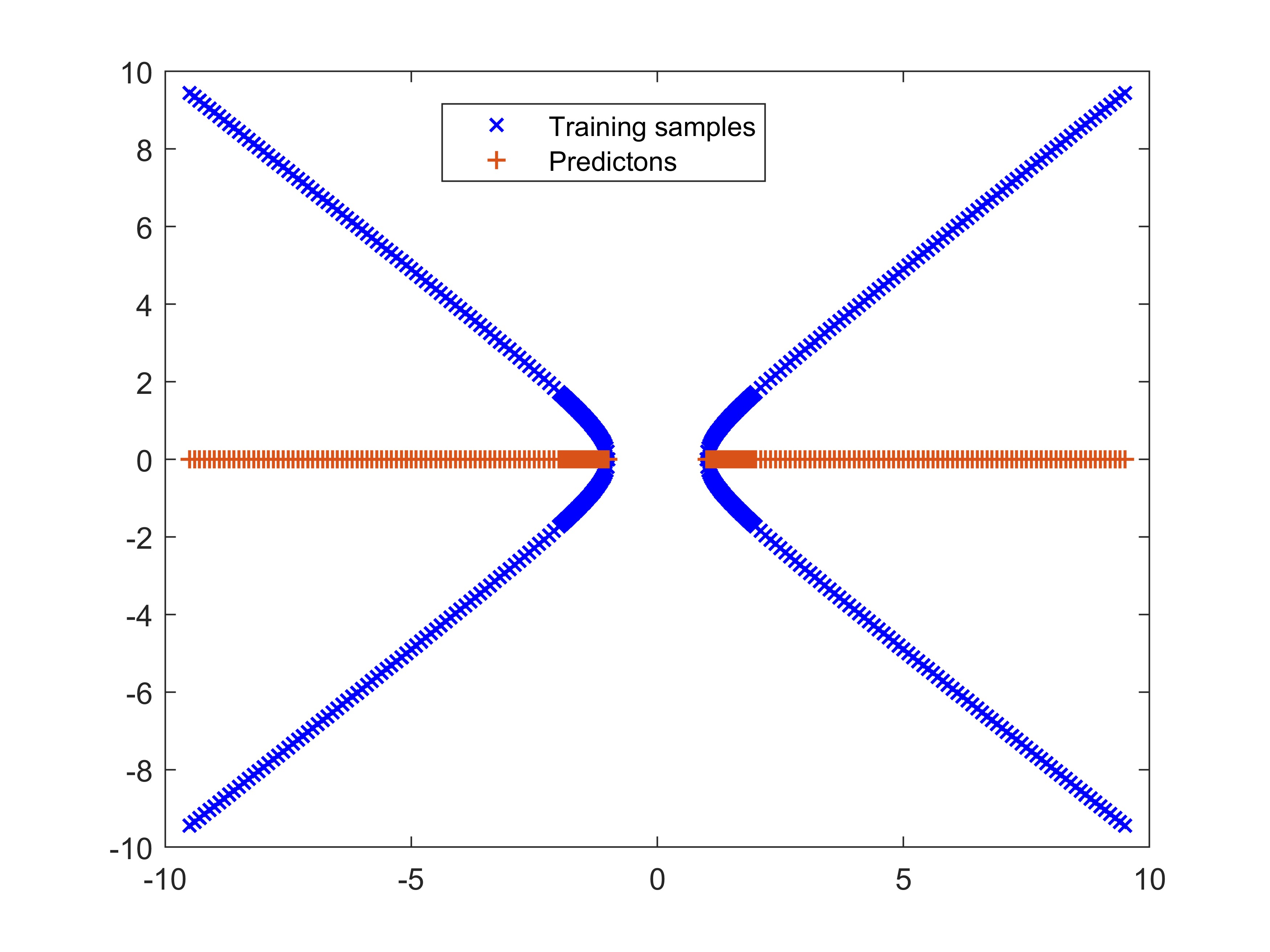}
			\caption{Explicit: $x^2-y^2=1$}
			\label{fig:c1c}
		\end{subfigure}
		\begin{subfigure}[b]{0.45\textwidth}
			\centering
			\includegraphics[width=\textwidth]{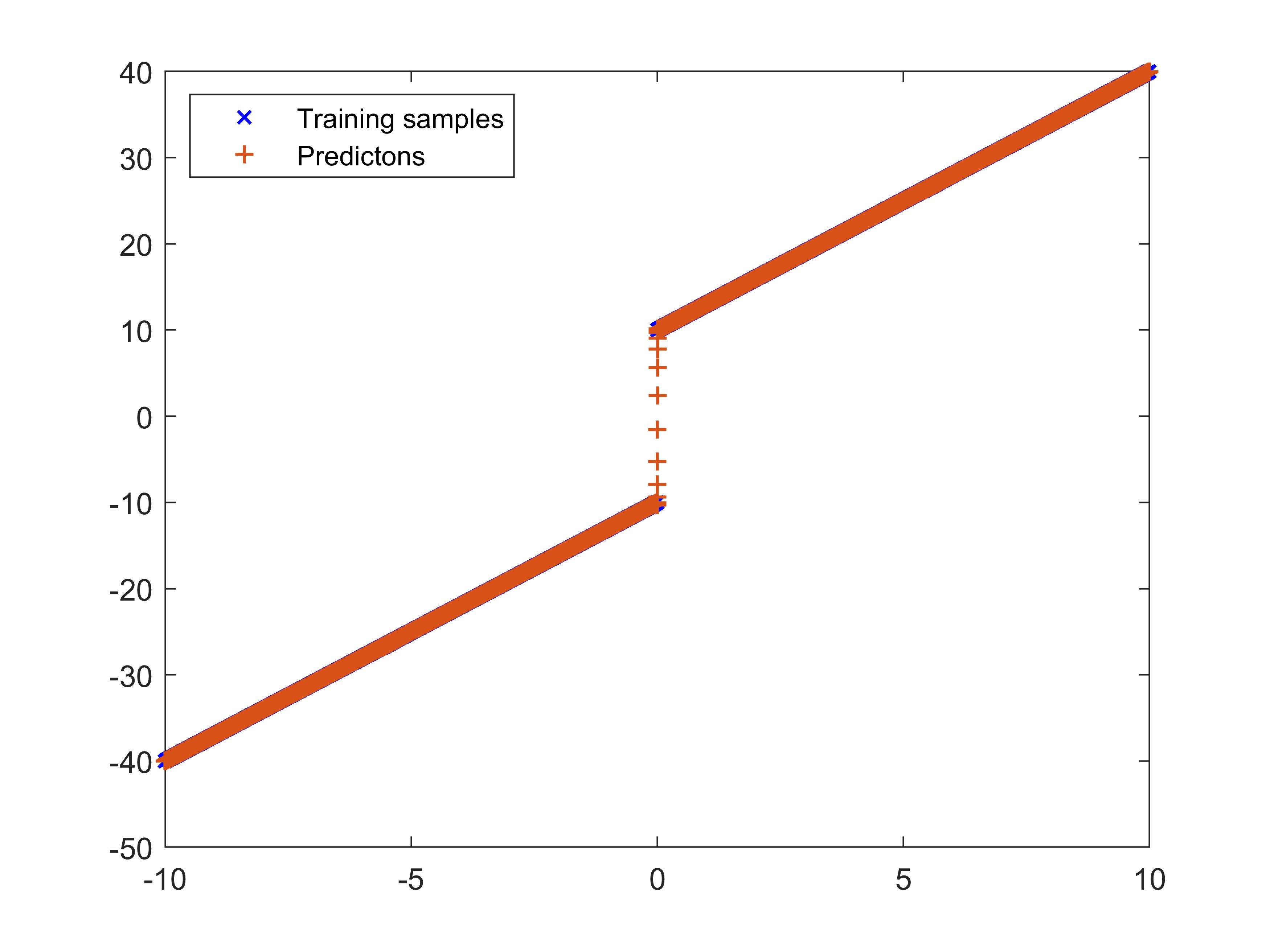}
			\caption{Explicit: $y=3x+10,x\geq0;$ $ y=3x-10,x\leq0$}
			\label{fig:c2c}
		\end{subfigure}
		\caption{Comparison between implicit and explicit form}
		\label{fig:toyDCV}
	\end{figure}
	
	For the multivalued function, implicit inference can successfully approximate the function (Fig.~\ref{fig:c1}), while explicit inference fails(Fig.~\ref{fig:c1c}). For the discontinuous function, implicit inference can capture the discontinuity(Fig.~\ref{fig:c2}), while explicit inference smoothes it out(Fig.~\ref{fig:c2c}).
	
	This demonstrates that by using an implicit constraint formulation, DCV can approximate a wider range of functions compared to standard explicit inference. The implicit formulation does not enforce continuity or single-valued outputs.
	
	It was shown on two numerical examples that DCV with implicit neural network inference was outperformed by explicit inference on challenging multivalued and discontinuous functions.
	This highlights the representational advantages of DCV for control-relevant modeling tasks.

	\subsection{Case 2: DCV for Batch Reactor Optimization}
	\subsubsection{Process description}
	A batch reactor with the exothermic reaction A + B → C is considered \cite{ubrich1999}. The reactor is assumed to be semi-batch, isothermal with cooling. The objective is to minimize the batch time $t_f$ needed to produce a specified amount of product C by optimizing the feed rate of reactant B.
	
	The system dynamics $\mathcal{M}_2$ are governed by material balances for A, B and volume V:
	\begin{align*}
		\dot{c}_A &= -k c_A c_B - \frac{u}{V} c_A \\
		\dot{c}_B &= -k c_A c_B + \frac{u}{V}(c_{B\text{in}} - c_B) \\
		\dot{V} &= u
	\end{align*}
	where, $\dot{c}_A$, $\dot{c}_B$ denote rate of change of concentration of A, B. $c_A$, $c_B$ denote concentration of A, B. $k$ is the reaction rate constant, $u$ is the feed rate of B, $V$ is reactor volume, and $c_{B\text{in}}$ is concentration of B in feed.
	The concentration of product C is:
	\begin{align*}
		c_C = \frac{c_{A 0} V_{0} + c_{C 0} V_{0} - c_A V}{V}
	\end{align*}
	where $c_{A 0}$,$c_{B 0}$,$c_{C 0}$ denote the initial concentration of A, B and C and $V_{0}$ denotes the initial volume.
	The maximum attainable temperature under cooling failure is:
	$$
	T_{\mathrm{cf}}(t)=T+\min \left(c_A(t), c_B(t)\right) \frac{(-\Delta H)}{\rho c_{\mathrm{p}}}.
	$$
	where $T$ is the reaction temperature, $\Delta H$ is the reaction enthalpies, $\rho$
	and $c_{\mathrm{p}}$ denote the density and  heat capacity of the reaction system 
	In this case, the considered measurements only including the three system states
	are represented as:
	$$
		y(k) = [c_A(k) \ c_B(k) \ V(k)]^{\top} 
	$$
	The dynamic optimization problem is as follows:
			$$\begin{aligned}
				 & \min _{t_{\mathrm{f}}, u(t)} J=t_{\mathrm{f}}                        \\
				 & \text { s.t. } \text{system dynamics } \mathcal{M}_2 \\
				 & \qquad u_{\min } \leq u(t) \leq u_{\max }                                   \\
				 & \qquad T_{\mathrm{cf}}(t) \leq T_{\max }                                    \\
				 & \qquad V\left(t_{\mathrm{f}}\right) \leq V_{\max }                          \\
				 & \qquad n_C\left(t_{\mathrm{f}}\right) \geq n_{C \mathrm{des}}
			\end{aligned}$$
	where $u_{\min }$ and $u_{\max }$ represent the minimum and maximum values for the feed flow. $T_{\max }$ and $V_{\max }$ stands for maximum attainable temperature and volume. $n_{C \mathrm{des}}$ denotes the required product output of a batch.
	
	The model parameters, together with their nominal values are given in Table \ref{tab:case2}
	\begin{table}[h]
		\caption{Model parameters, operating bounds and initial conditions for Case 2}
		\label{tab:case2}
		\centering
		\begin{tabular}{lll}
			\hline
			Variable      & Nominal Value  &  Unit \\
			\hline
			$k$               & 0.0482 & $\mathrm{l}/\mathrm{mol}\mathrm{h}$  \\
			$T$               & 70     & ${}^{\circ}\mathrm{C}$     \\
			$\Delta H$         & -60000 & $\mathrm{~J}/\mathrm{mol}$ \\
			$\rho$            & 900    & $\mathrm{~g}/\mathrm{l}$   \\
			$c_p$             & 4.2    & $\mathrm{~J}/\mathrm{gK}$  \\
			$c_{B\text{in}}$  & 2      & $\mathrm{~mol}$             \\
			$u_{\min}$        & 0      & $\mathrm{l}/\mathrm{h}$              \\
			$u_{\max}$        & 0.1    & $\mathrm{l}/\mathrm{h}$              \\
			$T_{\max}$        & 80     & ${}^{\circ}\mathrm{C}$     \\
			$V_{\max}$        & 1      & $\mathrm{l}$                          \\
			$n_{C\text{des}}$ & 0.6    & $\mathrm{~mol}$            \\
			$c_{A0}$ & 2      & $\mathrm{~mol}/\mathrm{l}$          \\
			$c_{B0}$ & 0.63   & $\mathrm{~mol}/\mathrm{l}$ \\
			$V_{0}$  & 0.7    & $\mathrm{l}$                          \\
			\hline                        
		\end{tabular}
	\end{table}
	\subsubsection{Dataset Generation and DCV training}
	
	In dynamic process optimization, converting the complex, continuous problem into a finite-dimensional nonlinear program via control vector parameterization is a well-established technique. As \cite{srinivasan2003a} demonstrates, the optimal input for this batch reaction comprises two distinct phases: $u_{path}$, implemented from $0$ to $t_s$, and $u_{min}$, employed from $t_s$ to the terminal time $t_f$. The switching time $t_s$ and $t_f$ become decision variables alongside the $2N$ input values defining $u_{path}$ through piecewise linear approximation. Numerical investigations reveal that using $N=20$ segments yields sufficient accuracy for the optimized solution.  The trajectories are resampled at a 1 h interval to generate corresponding optimal dataset.
	The reaction rate constant $k$ is treated as a disturbance which follows a uniform distribution, fluctuating $\pm 20\%$ around its nominal value.
	In this case, implicit forecasting form and explicit forecasting form DCV are designed and $M=1$, $H=1$.
	{A fully-connected neural networks with 7 inputs, 2 hidden layers of width 10 and 1 output is trained using the Adam optimizer. }
	\subsubsection{Closed-loop performance analysis}
	
	\begin{figure}[h]
		\centering
		\includegraphics[width=0.8\textwidth]{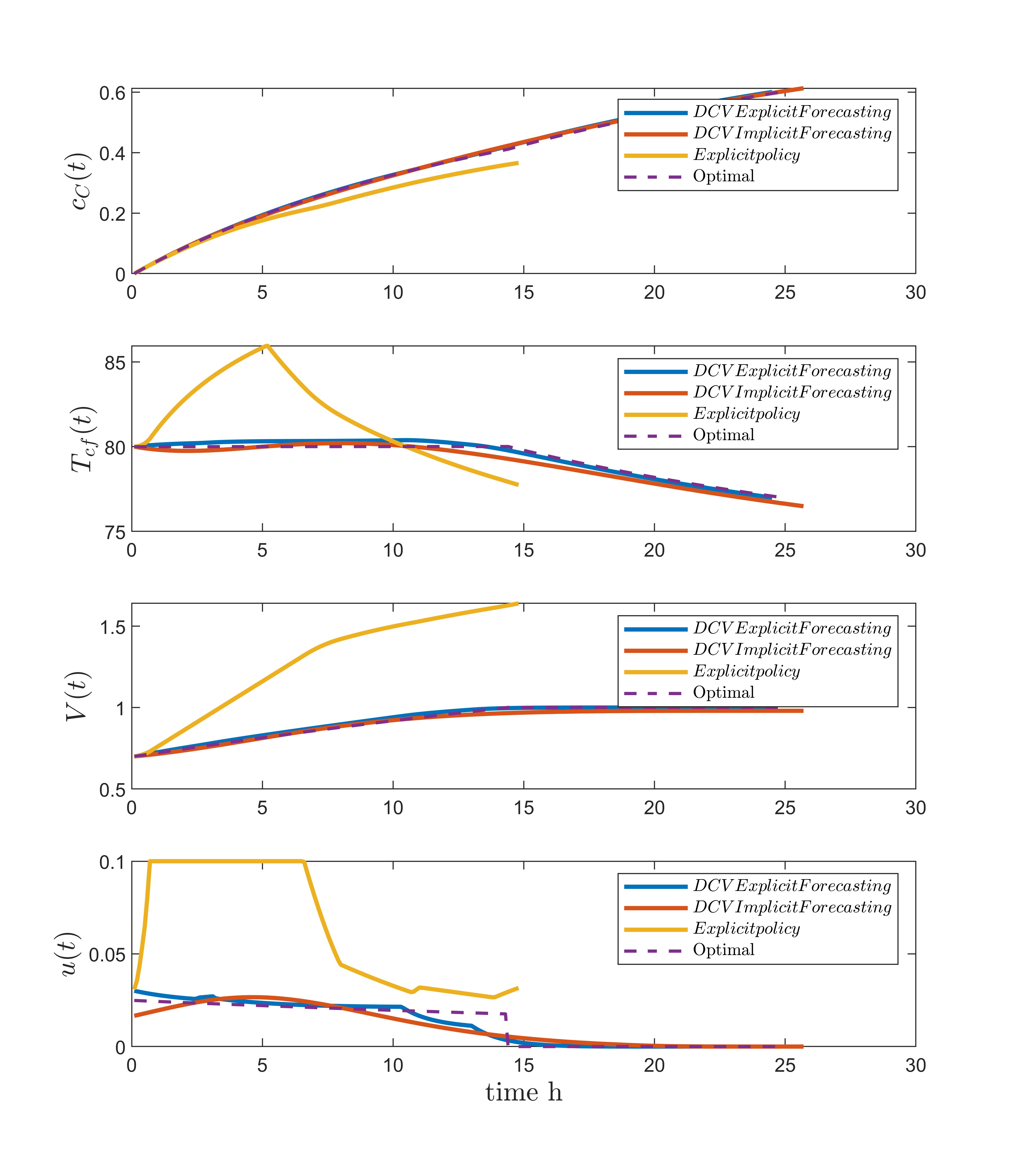}
		\caption{Closed-loop simulation under $-20\%$ disturbance}
		\label{fig:case2dy}
	\end{figure}
	Fig~\ref{fig:case2dy} shows the closed-loop simulations performed under $-20\%$ disturbances.
	It can be seen that the DCV is relatively close to the optimal input.
	
	Closed-loop simulations of DCV were conducted under 10 different disturbance scenarios.
	The results are summarized in Table \ref{tab:cl_results}, showing the batch times and constraint violations. 
	{The table presents the average batch production time, defined as the duration required to produce a sufficient quantity of the desired product, i.e., the time at which $n_C(t) = n_{C \mathrm{des}}$. The path constraint is expressed as $T_{\mathrm{cf}}(t) \leq T_{\max}$, while the terminal constraint is $V(t_{\mathrm{f}}) \leq V_{\max}$. The average violation ratio represents, for each disturbance scenario, the average of the maximum constraint violation relative to the constraint upper bound across all scenarios, computed as: $$\frac{1}{N}\sum\limits_{i=1}^{N}\frac{\max(\max\limits_{t}(g_i(t))-g_{\max},0)}{g_{\max}}$$,where $g_i(t)$ denotes the constraint value at the t-th instant in the i-th scenario, $g_{\max}$ stands for the upper limit of the constraint.
	The maximum violation value signifies the maximum constraint violation across all disturbance scenarios, computed as: $$\max\limits_{i=1,...,N}\max(\max\limits_{t}(g_i(t))-g_{\max},0).$$}
	
	\begin{table}[ht!]
		\caption{Closed-loop results under disturbances}
		\centering
		\label{tab:cl_results}
		\begin{tabular}{lccccc}
			\hline
			\multicolumn{1}{c}{} & \multirow{2}{*}{Time} & \multicolumn{2}{c}{Path Constrains   Violations} & \multicolumn{2}{c}{Terminal Constrains   Violations} \\
			\multicolumn{1}{c}{}     &       & average ratio & maximum value & average ratio & maximum value \\ \hline
			DCV Explicit Forecasting & 20.46 & 0.12\% & 0.3848 & 0.01\% & 0.00027 \\
			DCV Implicit Forecasting & 22    & 0.03\% & 0.2063 & 0.00\% & 0       \\
			Explicit policy          & 19.2  & 3.54\% & 5.9521 & 3.54\% & 0.6398  \\
			Optimal                  & 20.10 & -     & -      & -     & -         \\ \hline
		\end{tabular}
	\end{table}
	{Regarding constraint satisfaction, the explicit policy exhibits substantial constraint violations, pertaining to both path and terminal constraints. Consequently, although it achieves the shortest batch production time, this is attained at the expense of violating constraints. In contrast, the implicit policy based on DCV adheres to constraints with negligible violations. The explicit predictive DCV approach exhibits a slight inferiority in constraint violation compared to the implicit predictive DCV, but as evident from Fig. \ref{fig:case2dy}, their discrepancy in constraint handling is virtually negligible. However, in terms of production time, the explicit predictive DCV demonstrates a distinct advantage over the implicit predictive DCV.}
	This demonstrates the ability of implicit DCV to accurately represent discontinuous optimized profiles, enabling improved performance over an explicit formulation. The case study highlights the benefits of DCV for self-optimizing control of complex dynamic processes.

	\subsection{DCV for infinite time domain optimal control}
	This section demonstrates two cases to illustrate the applicability of self-optimizing DCVs to infinite-horizon optimal control problems, including a multimodal optimal control problem. The two cases and how the optimal data is generated are introduced below.
	\subsubsection{Case 3: Benchmark continuous stirred-tank reactor}
	The proposed methodology is first applied to a benchmark continuous stirred-tank reactor (CSTR) problem from \cite{mayne2011}. This problem comprises two states: the scaled concentration and reactor temperature, denoted by $x_1$ and $x_2$, respectively. The process is controlled by the coolant flow rate $u$. The model is:
	\begin{align*}
		\dot{x}_1 &= \frac{1}{\tau}(1-x_1) - kx_1\exp\left(-\frac{\beta}{x_2}\right) \\
		\dot{x}_2 &= \frac{1}{\tau}(x_f-x_2) + kx_1\exp\left(-\frac{\beta}{x_2}\right) - \alpha u(x_2-x_c)
	\end{align*}
	with model parameters $\tau=20$, $k=300$, $\beta=5$, $x_f=0.3947$, $x_c=0.3816$, and $\alpha=0.117$. The state and input constraints are $\mathcal{X}=[0.0632, 0.4632] \times [0.4519, 0.8519]$ and $\mathcal{U}=[0,2]$, respectively. The setpoint is $x^{\text{sp}}=[0.2632, 0.6519]^\top$ and $u_e = 0.7583$ . The stage cost is:
	$$
	\ell(x,u) = |x-x^{\text{sp}}|^2 + 10^{-4}|u-u_e|^2
	$$
	The problem is solved with a sampling time of 3 s and a prediction horizon of $L=140$.
	In this case, the learning samples are generated using Algorithm \ref{alg:DCV_data}, with parameters set as: $N = 1600$, $H = 0$, $M = 1$. And a grid-based sampling method as in \cite{hertneck2018} is used.
	The optimal control problem is solved with Casadi to produce optimal trajectory data.
	
	\subsubsection{Case 4: a multimodal optimal control problem}
	The second case is a multimodal optimal control problem used by Luus \cite{luus2019}
	$$
	\begin{aligned}
		\dot{x}_1= & -(2+u)\left(x_1+0.25\right) + \left(x_2+0.5\right) \exp \left(\frac{25 x_1}{x_1+2}\right), \\
		\dot{x}_2= & 0.5-x_2-\left(x_2+0.5\right) \exp \left(\frac{25 x_1}{x_1+2}\right),
	\end{aligned}
	$$
	where $x_1$ represents the deviation from dimensionless steady-state temperature and $x_2$ stands for the deviation from the dimensionless steady-state concentration. The control $u(t)$ represents the manipulation of the flow-rate of the cooling fluid, which is inserted in the reactor through a coil.
	 $\mathcal{X}_0=[-0.09, 0.09] \times [-0.09, 0.09]$ and $\mathcal{U}=[-1,5]$. The optimal control problem is to determine the $u(t)$ that minimises the performance index:
	$$
	\mathcal{J}=\int_0^{t_f}\left(x_1^2+x_2^2+0.1 u^2\right) \d t
	$$
	The problem is solved with a sampling time of 0.05 s and a prediction horizon of $L=150$.
	
	Fig~\ref{fig:case3mul} shows two modal solutions with initial state $x_0 = [0.09~0.09]^\top$.
	\begin{figure}[ht!]
		\centering
		\includegraphics[width=0.6\textwidth]{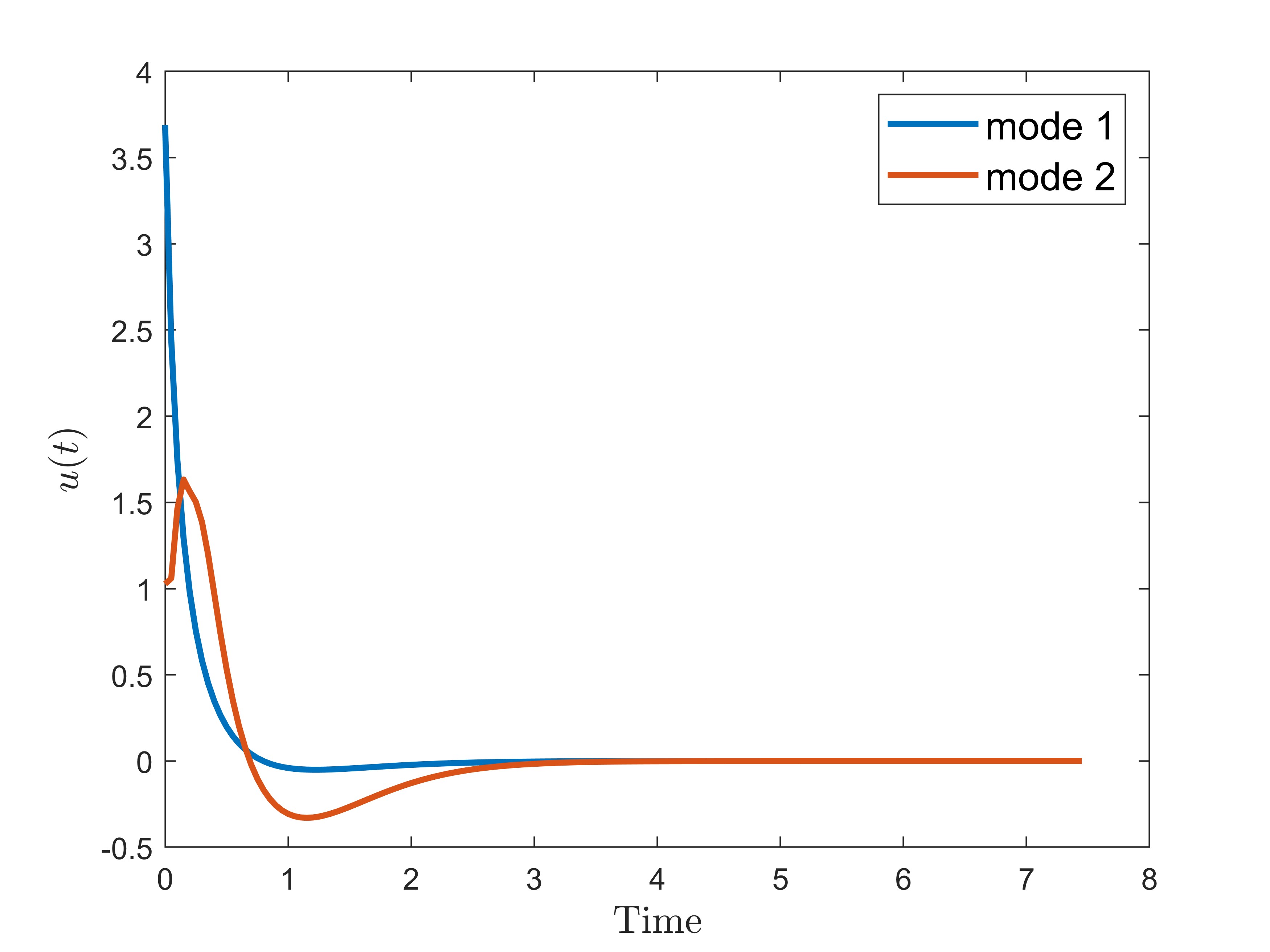}
		\caption{Multimodal solution case 3}
		\label{fig:case3mul}
	\end{figure}
	It can be seen that the trajectories are completely different.
	
	In this case, the learning samples are generated using Algorithm \ref{alg:DCV_data}, with parameters set as: $N = 1600$, $H = 0$, $M = 1$. And a grid-based sampling method as in \cite{hertneck2018} is used.
	Multiple modal solutions are obtained by solving the optimal control problem with multiple initial values. A total of 1600 globally optimal solutions and 1115 locally optimal solutions were obtained in 1600 disturbance scenarios. For training the control policy, 500 scenarios with local optimal solutions were randomly selected, and the data for those disturbance scenarios were removed from the dataset of global optimal solutions. 1100 global optimal solutions and 500 local optimal solutions form the final dataset. The purpose is to simulate the case where a certain proportion of non-globally optimal solutions may exist when solving optimal control problems.
	{In case 3, a fully-connected neural networks with 3 inputs, 2 hidden layers of width 10 and 1 output is trained using Levenberg-Marquardt backpropagation method \cite{more1978}. 
	In case 4, a fully-connected neural networks with 3 inputs, 2 hidden layers of width 10 and 1 output is trained using Levenberg-Marquardt backpropagation method.}
	\subsubsection{Results analysis}

	\begin{table}[ht!]
		\centering
		\caption{Closed loop performance comparison}
		\label{tab:case3}
		\begin{tabular}{@{}llll@{}}
			\hline
			& \multicolumn{1}{c}{\multirow{2}{*}{Case 3}} & \multicolumn{2}{c}{Case 4} \\ 
			& \multicolumn{1}{c}{}                       & one modul    & 2 moduls    \\ 
			\hline 
			Optimal solution             & 0.07858                                    & 0.01632      & 0.01632     \\
			DCV             & 0.07863                                    & 0.03689      & 0.03689     \\
			Explicit policy & 0.07863                                    & 0.03689      & 0.11115     \\ \hline
		\end{tabular}
	\end{table}
	{In Table~\ref{tab:case3}, the DCV is compared to an explicit control policy trained with MSE loss over 160 disturbance scenarios. The table presents the average of $\Delta \mathcal{J} = \mathcal{J} - \mathcal{J}^\star$ across all scenarios, calculated as: $\frac{1}{N}\sum\limits_{i=1}^{N}(\mathcal{J}_i - \mathcal{J}_i^\star)$, where ${J}$ represents the actual cost functional achieved and ${J}^\star$ denotes the optimal cost.}
	Case 4 one modal stands for the training dataset containing only global optimal solutions, while Case 4 two modals means the training samples include both local and global optimal solutions. Since Case 3 has only one modal solution, the optimal solutions are used.
	It can be seen that when multimodal solutions exist in the training samples, the DCV significantly outperforms the explicit policy. In problems with only a single modal solution, the two have very close performance.
	
	\begin{figure}[ht]
		\centering
		\includegraphics[width=0.6\textwidth]{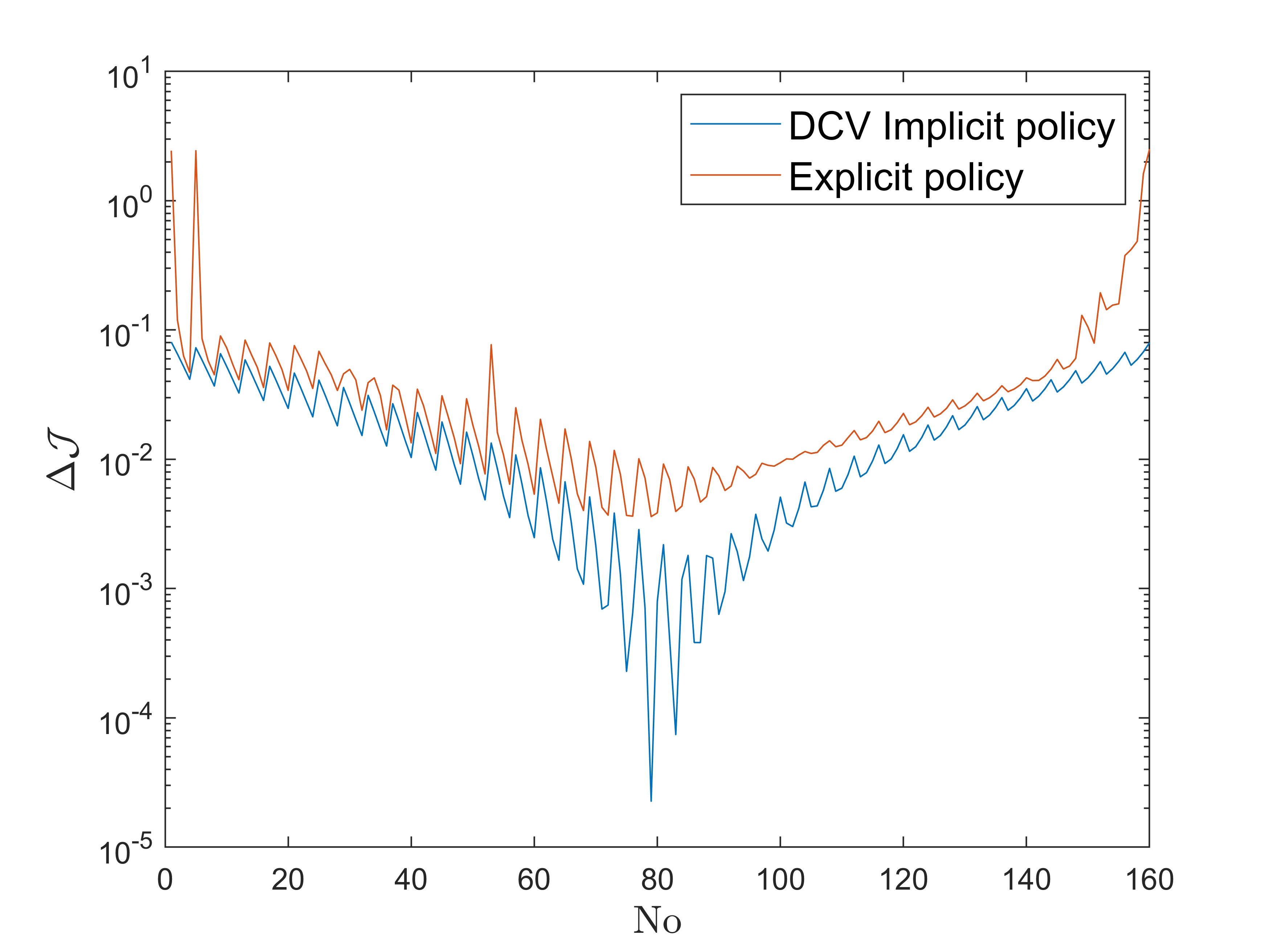}
		\caption{Simulation results in Case 4  multimodals}
		\label{fig:case3}
	\end{figure}
	
	Furthermore, Fig.~\ref{fig:case3} shows the closed-loop loss across different scenarios in case 4 two modals. {The ordinate means the value of $\Delta \mathcal{J} = \mathcal{J} - \mathcal{J}^\star$. The abscissa represents the serial number of different scenes.}
	It can be seen that the explicit policy has very high closed-loop loss in some scenarios, while the DCV performs consistently better across all scenarios. 
	
	%
	{\begin{table}[]
		\centering
		\caption{Inference time in Case 4 }
		\label{tab:timei}
		\begin{tabular}{@{}llll@{}}
			\hline
			& Optimization & DCV   & Explicit policy \\ \hline
			Inference time/per(ms) & 1021.8       & 45.18 & 3.913           \\ \hline
		\end{tabular}
	\end{table}
	DCV involves online root-finding of equations, necessitating a certain computational cost. Here in Table \ref{tab:timei}, the single-step inference times of DCV, explicit policy, and direct solution of the optimal control problem are compared. The results presented in this table were computed on a computer with a 2.90GHz processor and 8.00GB of RAM. The data in the table represents the average inference time required under different initial states and initial values. It can be observed that the online solution time of DCV is significantly shorter than the time required for directly solving the optimization problem, although the explicit policy time is slightly shorter.}
	\section{Discussion}
	\subsection{Comparison which model predictive control policy approximation}
	{Model predictive control policy approximation \cite{zhou2024a} employs deep neural networks to approximate the control laws predicted by models, garnering extensive attention due to its capability to substantially reduce the online computational demands of model predictive control, particularly for nonlinear systems. Within the framework considered in this paper, model predictive control policy approximation adopts explicit control policies to approximate the optimal control strategy. Broadly speaking, the method proposed herein and model predictive control policy approximation both fall under the purview of imitation learning \cite{florence2022a}. Consequently, the DCV and the implicit policies based on DCV proposed in this paper potentially hold application in imitation learning. }

	\subsection{Comparison of application scope of state-of-art methods}
	{The distinctions between the proposed method and state-of-the-art approaches in terms of scope and computational load are summarized in Table \ref{tab:com}.}
	\begin{table}[ht]
		\caption{Comparison of application scope and computational load of state-of-art methods}
		\label{tab:com}
		\centering
		\begin{tabular}{clcccccc}
			\hline
			\multicolumn{1}{l}{} &
			&
			\textbf{RTO} &
			\textbf{NMPC} &
			\textbf{SSOC} &
			\textbf {DSOC} &
			\textbf{MPCA} &
			\textbf{DCV-SOC} \\ \hline
			\multicolumn{1}{c|}{\multirow{2}{*}{\textbf{\begin{tabular}[c]{@{}c@{}}Computation\\ Load\end{tabular}}}} &
			Online &
			Middle &
			High &
			Low &
			Low &
			Low &
			Low \\
			\multicolumn{1}{c|}{} & Offline          & -          & -          & Middle     & High       & High       & High       \\ \hline
			\multicolumn{1}{c|}{\multirow{3}{*}{\textbf{\begin{tabular}[c]{@{}c@{}}Dynamic\\ Optimization\end{tabular}}}} &
			Fixed-horizon &
			\XSolid &
			\Checkmark &
			\XSolid &
			\Checkmark &
			\Checkmark &
			\Checkmark \\
			\multicolumn{1}{c|}{} & Infinite-horizon & \XSolid    & \Checkmark & \XSolid    & \XSolid    & \Checkmark & \Checkmark \\
			\multicolumn{1}{c|}{} & Free-horizon     & \XSolid    & \Checkmark & \XSolid    & \XSolid    & \Checkmark & \Checkmark \\ \hline
			\multicolumn{1}{c|}{\multirow{4}{*}{\textbf{\begin{tabular}[c]{@{}c@{}}Control\\ Law \\ Mapping\end{tabular}}}} &
			Continuous &
			\Checkmark &
			\Checkmark &
			\Checkmark &
			\Checkmark &
			\Checkmark &
			\Checkmark \\
			\multicolumn{1}{c|}{} & Discontinuous    & \Checkmark & \Checkmark & \XSolid    & \XSolid    & \XSolid    & \Checkmark \\
			\multicolumn{1}{c|}{} & One-to-one       & \Checkmark & \Checkmark & \Checkmark & \Checkmark & \Checkmark & \Checkmark \\
			\multicolumn{1}{c|}{} & Multivalued      & \Checkmark & \Checkmark & \XSolid    & \XSolid    & \XSolid    & \Checkmark \\ \hline
		\end{tabular}
	\end{table}
	 {RTO denotes real-time optimization, and NMPC represents nonlinear model predictive control, both of which require online solution of optimization problems, resulting in relatively higher online computational demands compared to SOC. However, since their control policies are obtained through online solutions, they can represent discontinuous and multi-valued control laws. SSOC and DSOC stand for steady-state self-optimizing control and traditional dynamic self-optimizing control, respectively, neither of which can address dynamic optimization problems with infinite or free time horizons. MPCA denotes model predictive control policy approximation, while DCV-SOC represents dynamic self-optimizing control based on DCV, the method proposed in this paper. Although MPCA can also solve the dynamic optimization problems considered herein, its adoption of explicit control policies precludes the representation of discontinuous and multi-valued functions\cite{li2022}.}
	\subsection{Relationship between DCV and energy-based policy}
	It is worth noting that the idea of using this implicit function form to approximate a mapping is similar to energy-based models(EBM)\cite{lecun2006}. 
	Energy-based models define an energy function that assigns scalar energy values to each point in the input space. The energy surface captures dependencies between inputs and outputs, with low energies near optimal or high-probability regions of the space. The energy function is parameterized and trained to match expected properties of the space, like minimizing energies at target outputs.
	Energy-based control policy have been successfully applied in robotics and theoretically proven to be better suited for approximating multi-valued functions \cite{ghaouiImplicitDeepLearning2020,pan2020}. DCV also can be treated as a novel form of EBM.
	
	The differences and similarities between the DCV form in this paper and energy functions can be summarized in the following table \ref{tab:DCV&EBM}:
	\begin{table}[th!]
		\caption{The difference between DCV and Energy based policy}
		\label{tab:DCV&EBM}
		\centering
		\begin{tabular}{lll}
			\hline
			\textbf{Feature} & \textbf{Dynamic controlled variables} & \textbf{Energy based policy} \\ \hline
			Function form & $h_{\theta}(u_k,\boldsymbol{\xi}_{[k-M,k-1]})$ & $ E_{\theta}(u_k,\boldsymbol{\xi}_{[k-M,k-1]})$ \\
			Inference & $\arg_{u_k} h_{\theta}(u_k,\boldsymbol{\xi}_{[k-M,k-1]})=0 $ & $\arg \min_{u_k} E_{\theta}(u_k,\boldsymbol{\xi}_{[k-M,k-1]})=0 $ \\
			Output & Controlled variable $c_k \in \mathbb{R}^{n_u}$ & Energy value $E \in \mathbb{R}$ \\ \hline
		\end{tabular}
	\end{table}
	Potentially, the advantage of DCVs over EBM is that multiple controlled variables can be decoupled and calculated separately. This is attractive for distributed systems, multi-agent systems, etc.
%
	\subsection{Co-design for DCV and controller}
	Because the form of DCVs is very flexible, almost all current research on explicit control strategies can be integrated into the framework of DCVs to realize the collaborative design of controlled variables and controllers, which increases the flexibility of existing controller design methods. The collaborative design of controllers and DCVs to enhance current control design approaches is also a promising research direction for the future.
	\subsection{Inference of DCV}
	The online inference of DCVs involves solving the DCVs equations online. Different solution methods or initial values may lead to different solutions. How to design or select the online inference approach is also worth investigating in the future. For example, using observed information to update the constraint range of $u_k$ might improve robustness and safety under DCV policies.
	\section{Conclusion}
	{In this paper, the dynamic SOC problem is formally defined for the first time. Unlike traditional SOC problems, emphasis is placed on maintaining the controlled variables at constant values at all times. Based on this, the concept of controlled variables is extended for dynamic self-optimizing control problems, and the notion of dynamic controlled variables (DCVs) is introduced for the first time, enabling controlled variables to better capture the dynamic characteristics of the system. }
	
	{An innovative implicit control strategy based on DCVs is proposed and theoretically proven to be more suitable for representing discontinuous or multivalued control laws. Simultaneously, the concept of DCVs is compared with the traditional notion of controllers based on explicit mappings between measurements and inputs, suggesting a shift in focus towards designing control strategies that actively constrain system output behavior, rather than passive reactive responses. }
	
	{The concept of dynamic SOC is also contrasted with sliding mode control, highlighting the distinctions between the two and emphasizing that dynamic SOC is a method oriented towards control structure design. Subsequently, the requirements for self-optimizing CVs are extended based on the definition of dynamic self-optimizing control, and four metrics are proposed to quantitatively evaluate the optimality of DCVs. }
	
	{Finally, the problem of designing self-optimizing DCVs is viewed as a mapping identification problem, and a general method for designing DCVs for dynamic optimization problems is presented, marking the first solution in the field of self-optimizing control for dynamic optimization problems with non-fixed time horizons. Ultimately, four case studies are conducted to validate the approach.}
	
	{Our results indicate that the DCV is a flexible and effective way to approximate time-varying optimal control policies, and that the implicit policy based on DCVs has advantages in accurately representing discontinuous or multi-modal optimization profiles.}
	Almost all control theory is predicated on designing explicit policy. The implicit policy based on DCVs furnishes a novel perspective and practical apparatus for constructing control policies.
	
	However, our work also has some limitations that need to be addressed in future research:
	\begin{enumerate}
		\item The performance of DCVs depends on the training samples, training method, function approximator architecture, and online inference approach, which need further investigation.
		\item This paper only shows DCVs can be applied to constrained problems, but does not theoretically guarantee safety and stability under the implicit policy based on DCVs. Safety and stability under the implicit policy based on DCVs is also an urgent research topic.
	\end{enumerate}
	Future research should concentrate on developing more robust and efficient techniques for designing and implementing DCV. Additionally, testing and comparing DCVs with other control methods on more realistic and challenging problems should be explored. This includes investigating methods for designing distributed DCVs and how DCVs can handle stochastic systems and systems with distributed parameters.

	\section*{Acknowledgments}
	This work is supported by the Research Funds of Institute of Zhejiang University-Quzhou( IZQ2022KYZX12, IZQ2022KYZX08,IZQ2022KJ3001, IZQ2022KJ1003 ) and key projects of Zhejiang High-end Chemical Technology Innovation Center (ACTIC-2022-013)
	\section*{Appendix}
	\subsection*{Variable description}
	All variables in this paper and their descriptions are in Table \ref{tab:var}.
	\begin{table}[!h]
		\caption{Variable description}
		\label{tab:var}
		\begin{tabular}{llll}
			\hline
			\textbf{Variable}    & \textbf{Description}         & \textbf{Variable} & \textbf{Description}                \\ \hline
			$x$                  & state variavle               & $H$               & predition window                    \\
			$x_0$                & initial state                & $M$               & observation window                  \\
			$u$                  & control input                & $\pi$             & controllor                          \\
			$c$                  & controlled variable          & $J$               & Jacobian matrix                     \\
			$y$                  & measurement                  & $\theta$          & parameter                           \\
			$w$                  & measurement noise            & $G$               & Sensitivity of c to u               \\
			$\xi$                & extend measurement           & $\mathcal{L}$     & loss function                       \\
			$d$                  & disturbance                  & $\tau$            & hyperparameters                     \\
			$g$                  & constrain                    & $\mathfrak{D}$    & data set                            \\
			$f$                  & dynamics equations           & $k$               & reaction rate constant              \\
			$m$                  & measurement equation         & $T$               & reaction temperature                \\
			$h$                  & controlled variable function & $\Delta H$        & reaction enthalpies                 \\
			$\mathcal{X}_0$      & Initial state distribution   & $\rho$            & density                             \\
			$\mathcal{D}$        & Disturbance distribution     & $c_p$             & heat capacity                       \\
			$\mathcal{W}$        & Noise distribution           & $c_{B\text{in}}$  & concentration of B in feed          \\
			$N$                  & Number of samples            & $u_{\min}$        & the lowwer limited of control input \\
			$L$                  & operating time horizon       & $u_{\max}$        & the upper limited of control input  \\
			$\mathcal{J}$        & toall cost function          & $n_{C\text{des}}$ & amount of C                         \\
			$\mathcal{J}^\star $ & optimal cost function        & $c_{A0}$          & initial concentration of A          \\
			$l_i$                & path cost function           & $c_{B0}$          & initial concentration of B          \\
			$l_f$                & terminal cost function       & $V_{0}$           & initial volume                      \\
			$S$                  & set                          & $c_A$             & concentration of A                  \\
			$S_{\mathrm{opt}}$   & optimal solution set mapping & $V$               & reactor volume                      \\ \hline
		\end{tabular}
	\end{table}
	
	\subsection*{Proofs of Theorem \ref{the:DCVF} }
	
	\begin{lemma}\label{the:florence2022a}
		Let $F:\mathbb{R}^m \rightrightarrows \mathcal{Y}\left(\mathbb{R}^n\right)$ be a set-valued function with closed graph, defined as
		\begin{align*}
			\text{gph }F = \{(x, y) \in \mathbb{R}^m \times\mathcal{Y}\left(\mathbb{R}^n\right) \mid y \in F(x)\}
		\end{align*}
		and $F(x) \neq \emptyset$ for all $x \in \mathbb{R}^m$. Then there exists an $L$-Lipschitz continuous function $E:\mathbb{R}^m \times \mathbb{R}^n \rightarrow \mathbb{R}$ such that
		$$
			F(x) = \arg \min_{y \in\mathcal{Y}\left(\mathbb{R}^n\right)} E(x,y), \quad \forall x \in \mathbb{R}^m.
		$$
		and the minimum value of $E(x,y)$ is zero at $x$.
	\end{lemma}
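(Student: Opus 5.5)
The plan is to take as energy the distance to the graph of $F$:
\[
E(x,y) := \operatorname{dist}\bigl((x,y),\,\text{gph }F\bigr) = \inf_{(x',y')\in \text{gph }F} \bigl\|(x,y)-(x',y')\bigr\|,
\]
using the Euclidean norm on $\mathbb{R}^m\times\mathbb{R}^n$. I avoid the more obvious choice $E(x,y)=\operatorname{dist}(y,F(x))$, because it is only as regular in $x$ as $F$ is, and $F$ may be discontinuous. The graph distance, by contrast, is Lipschitz in the joint variable automatically.

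\textbf{Well-definedness and Lipschitz property.} Since $F(x)\neq\emptyset$ for every $x$, the set $\text{gph }F$ is nonempty, so $E$ is finite. For any two points $p,q\in\mathbb{R}^{m+n}$ and any $z\in\text{gph }F$, the triangle inequality gives $\|p-z\|\le\|p-q\|+\|q-z\|$. Taking the infimum over $z$ and then swapping the roles of $p$ and $q$ yields $|E(p)-E(q)|\le\|p-q\|$. Hence $E$ is $1$-Lipschitz, and $L=1$ works; any $L\ge 1$ then also works.

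\textbf{Zero set of $E$.} By construction $E\ge 0$. The claim is that, for $y\in\mathcal{Y}$,
\[
E(x,y)=0 \iff y\in F(x).
\]
The direction ``$\Leftarrow$'' is trivial, since then $(x,y)$ itself lies in $\text{gph }F$. For ``$\Rightarrow$'', $E(x,y)=0$ means $(x,y)$ lies in the closure of $\text{gph }F$ in $\mathbb{R}^{m+n}$. This is the one point needing care. If $\text{gph }F$ is closed in $\mathbb{R}^{m+n}$ we are done immediately. If closedness is meant only relative to $\mathbb{R}^m\times\mathcal{Y}$, then the closure intersected with $\mathbb{R}^m\times\mathcal{Y}$ equals $\text{gph }F$. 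Since $y\in\mathcal{Y}$, we again get $(x,y)\in\text{gph }F$, i.e.\ $y\in F(x)$.

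\textbf{Conclusion.} Fix $x$. Every $y\in F(x)\subseteq\mathcal{Y}$ attains the value $0$, which is the global lower bound of $E$. Hence $\min_{y\in\mathcal{Y}}E(x,y)=0$, and this minimum is attained because $F(x)$ is nonempty. By the zero-set equivalence, the minimizers over $\mathcal{Y}$ are exactly the points of $F(x)$, so $\arg\min_{y\in\mathcal{Y}}E(x,y)=F(x)$ with minimum value zero, as required.

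The only genuine obstacle is the closure step just discussed, namely interpreting ``closed graph'' correctly when $\mathcal{Y}$ is not itself closed. Everything else is routine.
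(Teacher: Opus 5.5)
Your proof is correct and complete, including the careful treatment of the case where the graph is closed only relative to $\mathbb{R}^m\times\mathcal{Y}$. The paper does not prove this lemma itself; it defers to Theorem 1 of Florence et al., whose argument is, to my recollection, this same construction (the $1$-Lipschitz Euclidean distance to the closed graph of $F$), so your write-up is essentially a self-contained version of the cited proof.
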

	\begin{proof}
		See \cite{florence2022a} Theorem 1.
	\end{proof}
	
	\begin{lemma}
		\label{lem:eqeq}
		Let $c:\mathcal{X} \times \mathcal{Y} \rightarrow \mathbb{R}^n$ be a continuous function, where $\mathcal{X} \subseteq \mathbb{R}^m$ and $\mathcal{Y} \subseteq \mathbb{R}^n$. Define the solution mappings:
		\begin{align*}
			S_{eq}&: \mathcal{X} \rightrightarrows \mathcal{Y}, \quad S_{eq}(x) = \{y \in \mathcal{Y} \mid c(x,y) = 0\} \\
			S_{opt}&: \mathcal{X} \rightrightarrows \mathcal{Y}, \quad S_{opt}(x) = \arg \min_{y \in \mathcal{Y}} \|c(x,y)\|_2^2
		\end{align*}
		where $|\cdot|_2$ is the Euclidean norm.

		If $S_{eq}(x) \neq \emptyset$ for all $x \in \mathcal{X}$, then $S_{eq}(x) = S_{opt}(x)$ for all $x \in \mathcal{X}$.
	\end{lemma}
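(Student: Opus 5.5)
The plan is to prove Lemma~\ref{lem:eqeq} by a double inclusion. Both inclusions rest on one elementary fact: the objective $\|c(x,y)\|_2^2$ is nonnegative, and it vanishes exactly when $c(x,y)=0$. Throughout, fix an arbitrary $x\in\mathcal{X}$ and write $\phi_x(y):=\|c(x,y)\|_2^2\ge 0$ for $y\in\mathcal{Y}$.

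\emph{Step 1: $S_{eq}(x)\subseteq S_{opt}(x)$.} Take $y\in S_{eq}(x)$. Then $c(x,y)=0$, so $\phi_x(y)=0$. Since $\phi_x\ge 0$ on all of $\mathcal{Y}$, the point $y$ attains the infimum of $\phi_x$ over $\mathcal{Y}$, and that infimum is $0$. Hence $y\in\arg\min_{y'\in\mathcal{Y}}\phi_x(y')=S_{opt}(x)$.

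\emph{Step 2: $S_{opt}(x)\subseteq S_{eq}(x)$.} This is where the hypothesis $S_{eq}(x)\neq\emptyset$ is used, and it is the only real content of the lemma. Without it, the minimum of $\phi_x$ could be strictly positive, or not attained at all, and the two sets would differ. By hypothesis there is some $y_0\in S_{eq}(x)$, and Step 1 shows $\min_{\mathcal{Y}}\phi_x=\phi_x(y_0)=0$. So any $y\in S_{opt}(x)$ satisfies $\phi_x(y)=0$, that is, $\|c(x,y)\|_2=0$. Definiteness of the Euclidean norm then gives $c(x,y)=0$, so $y\in S_{eq}(x)$.

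Combining the two steps gives $S_{eq}(x)=S_{opt}(x)$, and since $x$ was arbitrary this holds for all $x\in\mathcal{X}$. Continuity of $c$ is not needed for this set identity; it only ensures that the sets are closed and the problem well posed. The point to emphasise in the write-up is that minimality alone does not force $\phi_x(y)=0$: one needs nonemptiness of $S_{eq}(x)$ to identify the optimal value as $0$ before concluding.

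In the proof of Theorem~\ref{the:DCVF}, this lemma will be combined with Lemma~\ref{the:florence2022a}, which supplies an $E$ with minimum value $0$ exactly on $F(x)$. Taking, for instance, $g(x,y)=E(x,y)\,e_1$ then gives $S=F$.
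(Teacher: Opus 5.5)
Your proof is correct and follows the paper's double-inclusion argument, phrased directly rather than by contradiction: nonnegativity of $\|c(x,y)\|_2^2$ gives $S_{eq}(x)\subseteq S_{opt}(x)$, and nonemptiness of $S_{eq}(x)$ fixes the optimal value at $0$, which gives $S_{opt}(x)\subseteq S_{eq}(x)$. Your version is slightly cleaner than the paper's, since you take the zero of $c(x,\cdot)$ at the same fixed $x$, whereas the paper's contradiction step uses a pair $(x_2,y_2)$ with a possibly different $x_2$.
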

	\begin{proof} 
		It can be proven by contradiction.
		
		Assume there exists a pair $(x_1,y_1)$, $ y_1 \in S_{opt}(x_1)$, and $y_1 \notin S_{eq}(x_1)$, then $c(x_1,y_1) \neq 0$, $\|c(x_1,y_1)\|_2^2 > 0$. Since for all $x \in \mathcal{X}$, $S_{eq}(x)$ is not empty, there exists $x_2,y_2$ such that $c(x_2,y_2)=0$, then $\|c(x_2,y_2)\|_2^2=0 < \|c(x_1,y_1)\|_2^2$. This contradicts $(x_1,y_1) \in S_{opt}$. 
		Therefore the assumption does not hold and $ S_{opt}(x) \subseteq  S_{eq}(x)$
		the lemma is proved. 
		
		Assume there exists a pair $(x_3,y_3)$, $ y_3 \in S_{eq}(x_3)$, and $y_3 \notin S_{opt}(x_3)$, then $c(x_3,y_3) = 0$, $\|c(x_3,y_3)\|_2^2 = 0 >S_{min}(x_3)$. Since for any $(x,y)$, $\|c(x,y)\|_2^2 \geq 0$, $\inf_{x} S_{min}(x) = 0$. This contradicts $c(x_3,y_3)^{\top}c(x_3,y_3) = 0 >S_{min}(x_3)$.
		Therefore, the assumption does not hold and $ S_{eq}(x) \subseteq  S_{opt}(x)$
		
		Thus, $ S_{eq}(x) =  S_{opt}(x)$
	\end{proof}
	
	\setcounter{theorem}{0}
	\begin{theorem}
		Let $F:\mathbb{R}^m \rightrightarrows \mathcal{Y} \subseteq \mathbb{R}^n $ be a set-valued mapping with closed graph, defined as
		\begin{align*}
			\text{gph }F = \{(\mathbf{x}, \mathbf{y}) \in \mathbb{R}^m \times \mathcal{Y} \mid \mathbf{y} \in F(\mathbf{x})\}
		\end{align*}
		and $F(\mathbf{x}) \neq \emptyset$ for all $\mathbf{x} \in \mathbb{R}^m$. Then there exists a continuous function $g:\mathbb{R}^m \times \mathbb{R}^n \rightarrow \mathbb{R}^n$ such that the solution mapping
		\begin{align*}
			S:\mathbb{R}^m \rightrightarrows \mathcal{Y}, \quad S(\mathbf{x}) = \{ \mathbf{y} \in \mathbb{R}^n \mid g(\mathbf{x},\mathbf{y}) = 0 \} 
		\end{align*}
		satisfies $S(\mathbf{x}) \neq \emptyset$ for all $\mathbf{x} \in \mathbb{R}^m$ and
		\begin{align*}
			F(\mathbf{x}) = S(\mathbf{x}), \quad \forall \mathbf{x} \in \mathbb{R}^m.
		\end{align*}
	\end{theorem}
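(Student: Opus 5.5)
The plan is to construct $g$ explicitly from the distance to the graph of $F$. The paper's lemmas suggest the route Lemma~\ref{the:florence2022a} $\to$ Lemma~\ref{lem:eqeq}, but the distance construction avoids having to match the $\arg\min$ over $\mathcal{Y}$ with a zero set over $\mathbb{R}^n$.

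\textbf{Construction.} Interpret ``closed graph'' as $\text{gph }F$ being a closed subset of $\mathbb{R}^m\times\mathbb{R}^n$. Since $F(\mathbf{x})\neq\emptyset$ for every $\mathbf{x}$, the graph is nonempty, so the distance function
\begin{align*}
\delta(\mathbf{x},\mathbf{y}) := \inf\{\|(\mathbf{x},\mathbf{y})-(\mathbf{x}',\mathbf{y}')\|_2 \mid (\mathbf{x}',\mathbf{y}')\in \text{gph }F\}
\end{align*}
is well defined and finite. Then set $g(\mathbf{x},\mathbf{y}) := \delta(\mathbf{x},\mathbf{y})\, e_1$, where $e_1$ is the first unit vector of $\mathbb{R}^n$. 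Any vector-valued function whose components all vanish exactly on the graph would do; for instance, $g=(\delta,\dots,\delta)$ also works.

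\textbf{Key steps.}
\begin{enumerate}
\item \emph{Continuity.} The function $\delta$ is $1$-Lipschitz, by the standard triangle-inequality argument for distance functions to a nonempty set. Hence $g$ is continuous, indeed Lipschitz.
\item \emph{Zero set equals the graph.} We have $g(\mathbf{x},\mathbf{y})=0$ iff $\delta(\mathbf{x},\mathbf{y})=0$. This holds iff $(\mathbf{x},\mathbf{y})$ lies in the closure of $\text{gph }F$, which by closedness is $\text{gph }F$ itself. That is, $g(\mathbf{x},\mathbf{y})=0$ iff $\mathbf{y}\in F(\mathbf{x})$.
\item \emph{Conclusion.} Step 2 gives $S(\mathbf{x})=F(\mathbf{x})$ for every $\mathbf{x}$. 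In particular $S(\mathbf{x})\subseteq\mathcal{Y}$, so $S$ indeed maps into $\mathcal{Y}$, and $S(\mathbf{x})\neq\emptyset$ by the nonemptiness hypothesis on $F$.
\end{enumerate}

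\textbf{Alternative route and main obstacle.} One could instead take the energy $E$ from Lemma~\ref{the:florence2022a}, which has minimum value $0$ attained exactly on $F(\mathbf{x})$, and set $g=E\,e_1$. The zero set of $g$ over $\mathcal{Y}$ is then $\arg\min_{\mathcal{Y}}E=F(\mathbf{x})$. Lemma~\ref{lem:eqeq} expresses the same equivalence between zero sets and minimizers of a squared norm.

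The only delicate point, on either route, is the interpretation of closedness. If the graph were only closed relative to $\mathbb{R}^m\times\mathcal{Y}$ with $\mathcal{Y}$ not closed, then $g$ could vanish at boundary points $\mathbf{y}\notin\mathcal{Y}$, and $S(\mathbf{x})$ would be larger than $F(\mathbf{x})$. I would resolve this by stating explicitly that closedness is taken in $\mathbb{R}^m\times\mathbb{R}^n$, which is the standard meaning. Under the relative interpretation, one would instead have to read $S(\mathbf{x})$ as $\{\mathbf{y}\in\mathcal{Y}\mid g(\mathbf{x},\mathbf{y})=0\}$, matching the stated codomain. With either convention fixed, Step 2 goes through verbatim.
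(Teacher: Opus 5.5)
Your proof is correct, and it is more direct than the paper's. The paper proceeds in two steps. It imports Lemma~\ref{the:florence2022a} to obtain a Lipschitz energy $E$ with $F(\mathbf{x})=\arg\min_{\mathbf{y}\in\mathcal{Y}}E(\mathbf{x},\mathbf{y})$ and minimum value $0$. It then takes $g$ to be some map with $\|g\|=E$ and uses Lemma~\ref{lem:eqeq} to turn that $\arg\min$ into the zero set of $g$. You skip both lemmas: you write down the distance $\delta$ to $\text{gph }F$, which is itself a $1$-Lipschitz witness for Lemma~\ref{the:florence2022a}, and check directly that its zero set is the graph.

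Your version buys several things:
\begin{itemize}
\item It is self-contained.
\item $g=\delta e_1$ is an explicit continuous map, whereas prescribing only $\|g\|=E$ does not by itself make $g$ continuous.
\item It avoids Lemma~\ref{lem:eqeq}, whose written proof compares $\mathbf{y}_1$ against a zero at a different point $\mathbf{x}_2$ rather than at $\mathbf{x}_1$ (an easily repaired slip).
\item It proves the theorem with $S(\mathbf{x})$ ranging over all of $\mathbb{R}^n$, as stated. The paper's proof instead quietly restricts $S(\mathbf{x})$ to $\mathcal{Y}$, since Lemma~\ref{the:florence2022a} gives no control over $E$ off $\mathcal{Y}$.
\end{itemize}

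Your reading of ``closed graph'' as closed in $\mathbb{R}^m\times\mathbb{R}^n$ is forced, not merely convenient. The zero set of any continuous $g$ is closed in $\mathbb{R}^m\times\mathbb{R}^n$, so if it equals $\text{gph }F$, the graph must be closed there.

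What the paper's route offers in exchange is mostly conceptual. Phrasing the result through an energy whose minimizers are $F(\mathbf{x})$ ties DCVs to the energy-based implicit policies discussed later in the paper.
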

	\begin{proof}
		Let $E(x,y)=\|g(x,y)\|$ in Lemma \ref{the:florence2022a}, so $F(\mathbf{x}) = \arg \min_{y \in \mathcal{Y}} \|g(x,y)\|$. And based on Lemma \ref{lem:eqeq},  $\arg \min_{y \in \mathcal{Y}} \|g(x,y)\| = \arg_{y \in \mathcal{Y}} g(x,y)=0$, thus 	$F(\mathbf{x}) = S(\mathbf{x})$
	\end{proof}
	
	\begin{proof}
	By Lemma \ref{the:florence2022a}, there exists an $L$-Lipschitz continuous function $E:\mathbb{R}^m \times \mathbb{R}^n \rightarrow \mathbb{R}$ such that
	\begin{align*}
		F(\mathbf{x}) = \arg \min_{\mathbf{y} \in \mathcal{Y}} E(\mathbf{x},\mathbf{y}), \quad \forall \mathbf{x} \in \mathbb{R}^m.
	\end{align*}
	and the minimum value of $E(x,y)$ is zero at $x$.
	
	Define $ \|g(\mathbf{x},\mathbf{y})\| = E(\mathbf{x},\mathbf{y})$, which is continuous since $E$ is Lipschitz continuous.
	
	Consider the solution mapping $S:\mathbb{R}^m \rightrightarrows \mathcal{Y}$ defined as
	\begin{align*}
		S(\mathbf{x}) = \{\mathbf{y} \in \mathcal{Y} \mid g(\mathbf{x},\mathbf{y}) = 0\},
	\end{align*}
	which $S(\mathbf{x}) \neq \{\emptyset\}$ since the minimum value of $E(x,y)$ is zero at $x$.
	
	By Lemma \ref{lem:eqeq}, $S(\mathbf{x}) = F(\mathbf{x})$ for all $\mathbf{x} \in \mathbb{R}^m$. Therefore, $g$ satisfies the requirements.
	\end{proof}
	

	\bibliographystyle{unsrt}
	\bibliography{biblio}

\end{document}